\newtheorem{theorem}{Theorem}[section] 
\newtheorem{lemma}[theorem]{Lemma}  
\newtheorem{example}{Example}[section]   }
\begin{document}

\def\paral{/\kern-0.55ex/}
\def\parals_#1{/\kern-0.55ex/_{\!#1}}
\def\bparals_#1{\breve{/\kern-0.55ex/_{\!#1}}}
\def\n#1{|\kern-0.24em|\kern-0.24em|#1|\kern-0.24em|\kern-0.24em|}
\newenvironment{proof}{
 \noindent\textbf{Proof}\ }{\hspace*{\fill}
  \begin{math}\Box\end{math}\medskip}

\newcommand{\A}{{\bf \mathcal A}}
\newcommand{\B}{{\bf \mathcal B}}
\def\C{\mathbb C}
\newcommand{\D}{{\rm I \! D}}
\newcommand{\dom}{{\mathcal D}om}
\newcommand{\pathR}{{\mathcal{\rm I\!R}}}
\newcommand{\Nabla}{{\bf \nabla}}
\newcommand{\E}{{\mathbb E}}
\newcommand{\Epsilon}{{\mathcal E}}
\newcommand{\F}{{\mathcal F}}
\newcommand{\G}{{\mathcal G}}
\def\g{{\mathfrak g}}
\newcommand{\HH}{{\mathcal H}}
\def\h{{\mathfrak h}}
\def\k{{\mathfrak k}}
\newcommand{\I}{{\mathcal I}}
\def\m{{\mathfrak m}}
\newcommand{\K}{{\mathcal K}}
\newcommand{\p}{{\mathbb P}}
\newcommand{\R}{{\mathbb R}}
\def\T{{\mathcal T}}
\newcommand{\pnabla}{{\nabla\!\!\!\!\!\!\nabla}}
\def\X{{\mathbb X}}
\def\Y{{\mathbb Y}}
\def\L{{\mathcal L}}
\def\1{{\mathbf 1}}
\def\half{{ \frac{1}{2} }}

\def\Ad{{\mathop {\rm Ad}}}
\def\ad{{\mathop {\rm ad}}}
\def\Conj{{\mathop {\rm Ad}}}
\newcommand{\const}{\rm {const.}}
\newcommand{\eg}{\textit{e.g. }}
\newcommand{\as}{\textit{a.s. }}
\newcommand{\ie}{\textit{i.e. }}
\def\s.t.{\mathop {\rm s.t.}}
\def\esssup{\mathop{\rm ess\; sup}}
\def\Ric{\mathop{\rm Ric}}
\def\div{\mathop{\rm div}}
\def\ker{\mathop{\rm ker}}
\def\Hess{\mathop{\rm Hess}}
\def\Image{\mathop{\rm Image}}
\def\Dom{\mathop{\rm Dom}}
\def\id{\mathop {\rm Id}}
\def\Image{\mathop{\rm Image}}
\def\Cyl{\mathop {\rm Cyl}}
\def\Conj{\mathop {\rm Conj}}
\def\Span{\mathop {\rm Span}}
\def\trace{\mathop{\rm trace}}
\def\ev{\mathop {\rm ev}}
\def\Ent{\mathop {\rm Ent}}
\def\tr{\mathop {\rm tr}}
\def\graph{\mathop {\rm graph}}
\def\loc{\mathop{\rm loc}}
\def\so{{\mathfrak {so}}}
\def\o{{\mathfrak {o}}}
\def\<{{\langle}}
\def\>{{\rangle}}
\def\span{{\mathop{\rm span}}}
\def\P{{\mathbb P}}
\def\su{{\mathfrak {su}}}

\renewcommand{\thefootnote}{}

\author{Xue-Mei Li}
\institute{Mathematics Institute, The University of Warwick, Coventry CV4 7AL, U.K.\\
Email address: xue-mei.li@warwick.ac.uk}
\title{Effective Diffusions  with Intertwined Structures}
\date{}
\maketitle

\begin{abstract}   
Let $p:N\to M$ be a surjective map of smooth manifolds. We are concerned with singular perturbation problems associated to a pair of second order positive definite differential operators with no zero order terms, that are intertwined by $p$. We discuss the associated random perturbations of stochastic differential equations and present a number of examples including  perturbation to geodesic flows and construction of a Brownian motion on $S^2$ through homogenisation of  SDE's on the Hopf fibration.  

  \end{abstract}
\bigskip

 {\it keywords.}  perturbation,   stochastic differential equations,  averaging,  homogenisation, geodesic flows,  Hopf fibration, frame bundle, holonomy bundle.\\
 
{\it Mathematics Subject Classification (2000).}  60H10, 58J65, 37Hxx, 53B05

 \section{Introduction} 
The principal idea of singular perturbation is to deduce long term trends of a complex system   from that of a relatively simple one for which some observables are known.  We are mainly concerned with  random perturbation of stochastic differential equations with  intertwined diffusion structures. 
A  {\it diffusion operator} $\B$ on a smooth finite dimensional manifold $N$ is a second order differential operator  with positive definite symbol and vanishing zero order term. Its symbol $\sigma^\B$ is a real valued bilinear map on the cotangent bundle, determined by
 $$\sigma^\B(dg_1,dg_2)={1\over 2}\left(\B(g_1g_2)-(\B g_1)g_2-g_1(\B g_2)\right),$$
 where $g_1,g_2$ are  real-valued $C^2$ functions on $N$.
Let $n=\dim(N)$. In a local coordinate system, let
 $$\B g={1\over 2}\sum_{i,j=1}^n a_{i,j}{\partial^2 g\over \partial y_i\partial y_j}+\sum_{k=1}^n b_k{\partial g\over \partial y_k}$$ where  $a_{i,j}$  and $b_k$ are smooth functions with the $n\times n$ matrix valued function $(a_{i,j})$ positive symmetric. Then for $y\in N$, 
   $$\sigma^\B_y(dg_1(y), dg_2(y))={1\over 2}\sum_{i,j=1}^na_{i,j} (y){\partial g_1\over \partial y_i}(y){\partial g_2\over \partial y_j}(y).$$
  The operator $\B$  is said to be elliptic if the symbol is strictly positive and to have constant rank if the rank of its symbol $\sigma^\B_u$ is  constant in $u$.
   
 Let $p:N\to M$ be a smooth onto map between smooth manifolds $N$ and $M$.  Denote by $Tp: TN\to TM$ the differential of $p$. 
 If $\B$ is an operator on $N$ and $\A$ an operator on $M$ such that  $$\B(f\circ p)=(\A f)\circ p$$ for all real valued $C^2$ functions $f$ on $M$,
we say that $\B$ and $\A$ are {\it intertwined} or $\B$ is over $\A$. The simplest intertwining is given by projection of a product space to one of the factor spaces. 


Let $T_up:T_uN\to T_{p(u)}M$ be the differential of the map $p$ at $u$, a linear map between tangent spaces.  Its kernel is said to be a vertical  tangent space, a subspace of
the tangent space  $T_uN$  and is denoted by $VT_uN$. The vector bundle $VTN=\cup_u VT_nN$ is called the vertical tangent bundle.
   We say that a diffusion operator $\B^0$ on $N$ is {\it vertical} if $\B^0(f\circ p)=0$ for any $C^2$ function $f$ on $M$.
   
Suppose that $\B$ is over $\A$ and that $\{\sigma_x^\A, x\in M\}$ has constant rank then there is a unique smooth lifting map 
$$\h_u: \Image[\sigma_{p(u)}^\A]\subset T_{p(u)}M\to \Image(\sigma^\B_u)\subset T_uN$$ such that $T_up\circ \h_u$ is the identity map (Proposition 2.1.2 in Elworthy-LeJan-Li\cite{Elworthy-LeJan-Li-book-2}). The image $\h_u$ induces a smooth distribution, called the horizontal distribution associated to $\A$. 
 In the case of $\A$  elliptic let $HT_uN$ be the image of $\h_u$ then
$$T_uN=HT_uN\oplus VT_uN.$$
In the case where $p:N\to M$ is a principal bundle this induces an Ehresmann connection on $N$ with a corresponding connection 1-form.

Let $\A=\half\sum_{i=1}^m L_{X_i}L_{X_i}+L_{X_0}$,  where $X_i$ are vector fields  and $L_{X_i}$  denotes Lie differentiation in the direction of $X_i$.  For short we also write $\A=\half\sum_{i=1}^m X_i^2+X_0$.
Suppose that  $X_0\in \Image\sigma^\A$ we say $\A$ is cohesive. An elliptic operator is cohesive. The horizontal lift of $\A$ is:   
$$\A^H=\half \sum {\tilde X_i}^2+{\tilde X_0}$$ where $\tilde X_i(u)$ is the horizontal lift of the tangent vector $X_i(p(u))$.
By Theorem 2.2.5 in \cite{Elworthy-LeJan-Li-book-2},  there is a unique vertical diffusion $\B^v$ such that $\B=\A^H+\B^v$. If $u$ is a regular point of the map $p$,  
the $\B^v$ diffusion starting from $u$ stays in the sub-manifold $p^{-1}(p(u))$. 
The vertical diffusion operator is `elliptic' if
 $\sigma_u^{\B^v}: VT_uN\times VT_uN\to \R$ is strictly positive definite.

Let $\A$ be a cohesive diffusion on $M$ and $\B^0$ a vertical diffusion.  Let $\L^\epsilon={1\over \epsilon}\A^H+\B^0$, or $\L^\epsilon=\A^H+{1\over \epsilon}B^0$ where $\epsilon$ is a real number which we take to zero. We would like to understand the asymptotic behaviour of the solutions of
${\partial \over \partial t}=\L^\epsilon$
as $\epsilon $ goes to zero.

For simplicity write $\L^\epsilon={1\over \epsilon}\L_0+\L_1$. Let us expand $f^\epsilon(t,y)$, a solution of the parabolic differential equation
 ${\partial \over \partial t}f^\epsilon(t,y)=\L^\epsilon f^\epsilon (t,y)$ in $\epsilon$: $f^\epsilon={1\over \epsilon} f_0+f_1+ \epsilon f_2+o(\epsilon)$.
What can we say about $f_i$? Is there a diffusion operator $\bar L$ such that  $f_1$ solves ${\partial \over \partial t}=\bar L $? See e.g. the book of  Arnold \cite{Arnold89} in the context of perturbation to Hamiltonian systems and a recent book of Stuart-Paviliotis \cite{Pavliotis-Stuart08} on multi-scale methods.
 
We now introduce notations concerning  Markov processes associated to diffusion operators. Let $(\Omega, \F, \F_t, P)$ be a filtered probability space. 
Let $\phi_t(y, \omega)$ be a family of  strong Markov stochastic process with values in a manifold $N$  with $\phi_0(y)=y$.  The $\omega$ variable will be suppressed for simplicity. Define a linear operator $P_t$ on the space of bounded functions by $P_t f(y)=\E f(\phi_t(y, \omega))$, where $\E$ denotes integration with respect to $P$. 
Then $(P_t, t\ge 0)$ is a semigroup of operators with $P_0$ the identity map. Its infinitesimal generator $\L$ is defined by $\L f=\lim_{t\to 0}{P_t f-f\over t}$
with domain the set of functions such that the limit exists. For such $f$, $P_t f$  solves the  parabolic equation ${\partial \over \partial t}=\L$ with initial function $f$.  
On the other hand given any diffusion operator $\L$, there is a strong Markov process whose infinitesimal generator is $\L$. This can be seen by introducing a H\"ormander form representation of $\L$ and a stochastic differential equation. When $\L$ is reasonably smooth the Markov process is continuous in $t$ for almost surely all $\omega\in \Omega$ and the Markov process is said to be a diffusion (process) with generator $\L$.  
If $\L={1\over 2}\Delta$, where $\Delta$ is the Laplacian operator for a Riimannian metric, we say the diffusion process is a Brownian motion for that metric.  

The  dynamic picture of the perturbation problem is as following. Let $y_t^\epsilon$ be a diffusion operator with initial value $y_0$ associated to $\L_0+\epsilon \L_1$. The  $y_t^\epsilon$ process follows roughly the orbit determined by $\L_0$ with negligible differences. However on a large time scale of order $1/\epsilon$,  the deviation of the perturbed orbit from the unperturbed one becomes visible.  In general we ask whether there is a function of $y_t^\epsilon$ that does not change with time when $\epsilon=0$.
If so, denote this function by $F$. The variable $F(y_t^\epsilon)$ should vary slowly when $\epsilon \to 0$ and in the limit  $F(y_{t\over \epsilon}^\epsilon)$  may converge
to a Markov process whose probability distribution is determined by a diffusion operator $\bar L$. It is desirable to find all conserved quantities and their explicit probability distribution, i.e. the limiting diffusion operator $\bar L$, which is said to be the effective motion. There  is extensive literature on this and we refer to the following books and the references therein:  Bensoussan-Lions-Papanicolau \cite{Bensoussan-Lions-Papanicolaou78} and Freidlin-Wentzell \cite{ Freidlin-Wentzell98}.

We now consider the problem at the level of stochastic differential equations. Every diffusion operator, if sufficiently smooth, can be represented as sum of squares of vector fields,
the so called H\"ormander form representation. We write the two diffusion operators  $\L_0$ and $\L_1$ in  H\"ormander form: 
$\L_0={1\over 2}\sum_{i=1}^m {X_i}^2+{X_0}$, and  $\L_1={1\over 2}\sum_{i=1}^m {Y_i}^2+{Y_0}$. 
Let $(b_t^i, w_t^j)$ be independent one dimensional Brownian motions on a given filtered probability space $(\Omega, \F, \F_t,P)$ with the usual assumptions.
Let $\phi_t^\epsilon(y)$ denote the solution  to the stochastic differential equations (SDE) driven by the vector fields ${1\over \sqrt \epsilon}X_i, {1\over \epsilon} X_0,   Y_i,  Y_0$, $i=1,\dots, m$, with initial point $y$:
$$dy_t^\epsilon={1\over \sqrt  \epsilon}  \sum_i X_i(y_t^\epsilon) \circ db_t^i+{1\over \epsilon} X_0(y_t^\epsilon)dt+
\sum_j Y_j(y_t^\epsilon)\circ dw_t^j+Y_0(y_t^\epsilon)dt.$$
The solutions are continuous Markov processes (diffusion processes) with  generator  $\L^\epsilon$.

 {\bf Main Results.}
  The structure of the paper is as following.  In section 2, we present several  intertwining examples to illustrate the terminologies. 
    In section  \ref{Hopf-section} SDE's on the Hopf vibration are investigated.  We construct stochastic process with generator a hypoelliptic horizontal diffusion on $S^3$ from two vector fields one of which the vertical vector field induced by the circle action and the other a horizontal vector field induced from an element of the Lie algebra of norm $1$. In particular we construct a Brownian motion on $S^2$ through homogenisation. 
 
 
 In section \ref{OM-section} the state space is the frame bundle of a Riemannian manifold.  The frame bundle is closely related to the tangent bundle of the manifold, notably suitable first order differential equations on the frame bundle correspond to second order differential equations on the manifold.  It is also the natural space to record the position and orientation of a particle. For intertwined diffusion models on  the  frame bundle of a complete Riemannian manifold $M$ there are two basic slow motions:  the horizontal diffusions and the vertical diffusions. The horizontal diffusions appear naturally in Malliavin calculus.   The solution flow of a vertical diffusion can be considered  as a random evolution of a linear frame and is an interesting object in geometry, e.g.  it was examined in Brendle-Schoen \cite{Brendle-Schoen} in connection with the question as to whether positive isotropic curvature condition  is preserved by R. Hamilton's ODE.

In section \ref{section-vertical} Perturbations to vertical diffusions on the orthonormal frame bundle, whose projection to the manifold is a fixed point are studied. The effective motion on the manifold is a diffusion which is of the same `type' as the perturbation (Theorem \ref{level-thm}). Special care has to be taken in this case to avoid assumptions on the injectivity radius of the orthonormal frame bundle. 
 In section \ref{OU-section} a suitable perturbation model to the geodesic flow, a first-order ODE on the frame bundle, is developed. The perturbation is of Ornstein-Uhlenbeck type and in the limit we see a diffusion with generator ${4\over n(n-1)}\Delta_H$,  $\Delta_H$ being the horizontal Laplacian, and a rescaled Brownian motion to the manifold. This study relates to approximation of Brownian motions and by extension that of stochastic differential equations. We would also like to compare this to the philosophy in Bismut \cite{Bismut-hypoelliptic-Laplacian}, that
$\ddot x={1\over T}(-\dot x+\dot w)$ interpolates between classical Brownian motion $(T\to 0)$ and the geodesic flow ($T\to \infty$).
 In  section \ref{section-holonomy}  perturbation to the the semi-elliptic horizontal flow is considered and we obtain an effective motion that is transversal to the holonomy bundle.   
 
   
  In terms of ellipticity our SDEs have the following features.  The unperturbed system can be elliptic or hypoelliptic. The perturbations could be  elliptic, hypoelliptic or degenerate.  We used two types of scalings:  the standard scaling and the scaling of `Ornstein-Uhlenbeck type'.  A related problem on commutation of linearisation with averaging is discussed in a paper in preparation \cite{Li-OM-2}.  

\section{Some Basic Examples}
Intertwined structures occur naturally.  One such standard example is  projection of  a product space. The other example is a principal bundle  $p:N\to M$ with a group action $G$ which acts freely. The latter introduces a twist in the product structure. These will include projections of groups to their quotient groups and that of manifolds to their moduli spaces. See \cite{Elworthy-LeJan-Li-book-2} and Liao \cite{Liao-00} for a discussion of diffusion processes  on symmetric spaces. We give some examples which illustrate the procedure of averaging with intertwined structures and the local structure of frame bundles.
Let us begin with the trivial example of a cylinder $\R\times S^1$. Let $z$ denote the $S^1$ direction, $p(x,z)=z$, $\A={\partial^2 \over \partial z^2}$,
and $\B=\sin z {\partial\over \partial x}+{\partial ^2\over \partial z^2}$. Then $\A^H={\partial^2 \over \partial z^2}$.
 The effective motion associated to associated to hypoelliptic flows $\sin z {\partial\over \partial x}+{1\over \epsilon}{\partial ^2\over \partial z^2}$ converges to a simple completely degenerate motion.

 {\bf Example 1.}
Take $N=\R^3$ with the Heisenberg group structure. For $(x,y,z)\in \R^3$ define $p(x,y,z)=(x,y)$. 
Let   $Y_1={\partial \over \partial x}+{1\over 2} y {\partial \over \partial z} $ and $Y_2={\partial \over \partial y} -{1\over 2} x{\partial \over \partial z} $ be the left invariant vector fields on  $N$ associated to $(1,0,0)$ and $(0,1,0)$. 
 Let $\A^H= \half (Y_1^2+Y_2^2)$.
Consider $\B^0= \half{\partial ^2\over \partial z^2}-{\partial \over \partial z}$. Then
 $\L^\epsilon= \half (Y_1^2+Y_2^2)+{1\over \epsilon}\B^0$ is over  the cohesive operator $\A={1\over 2}( {\partial^2 \over \partial x^2}+{\partial^2 \over \partial y^2})$ on $\R^2$. The associated horizontal lift  is $(u,v)\mapsto (u,v,  \frac{1}{2} xv-\frac{1}{2}yu)$. 
The vector fields $Y_1, Y_2$ are respectively  the horizontal lifts of ${\partial \over \partial x} $ and
${\partial \over \partial y}$ and $\A^H$ is the horizontal lift of $\A$ which is seen to have constant rank $2$. The `slow part' of the fully elliptic $\L^\epsilon$ diffusion  converges weakly to the $\A^H$ diffusion, a hypo-elliptic diffusion, as expected. 
We illustrate what does it mean by the `slow part' by the following trivial, but explicit, SDE:   $${\begin{split} dx_t^\epsilon=&\cos(z_t^\epsilon)\circ db_t^1-\sin(z_t^\epsilon)\circ db_t^2, dy_t^\epsilon=\sin(z_t^\epsilon)\circ db_t^1+\cos(z_t^\epsilon)\circ db_t^2\\
dz_t^\epsilon =&{1\over 2} (x_t^\epsilon\sin(z_t^\epsilon)-  y_t^\epsilon \cos(z_t^\epsilon))\circ db_t^1+{1\over 2} (x_t^\epsilon \cos(z_t^\epsilon)+ y_t^\epsilon \sin(z_t^\epsilon))\circ db_t^2\\
& +{1\over \sqrt \epsilon}  dw_t-{1\over \epsilon}  z_t^\epsilon dt.  \end{split}}$$
Two obvious slow variables are $(x_t^\epsilon, y_t^\epsilon)$. In our terminology the slow part is the solution to the following SDE parametrized by $z_t^\epsilon$:
$${\begin{split} d \tilde x_t^\epsilon=&\cos(z_t^\epsilon)\circ db_t^1-\sin(z_t^\epsilon)\circ db_t^2, d\tilde y_t^\epsilon=\sin(z_t^\epsilon)\circ db_t^1+\cos(z_t^\epsilon)\circ db_t^2\\
d\tilde z_t^\epsilon =&{1\over 2} (\tilde x_t^\epsilon\sin(z_t^\epsilon)-  \tilde y_t^\epsilon \cos(z_t^\epsilon))\circ db_t^1+{1\over 2} (\tilde x_t^\epsilon \cos(z_t^\epsilon)+ \tilde y_t^\epsilon \sin(z_t^\epsilon))\circ db_t^2.  \end{split}}$$
The law of the process in independent of $\epsilon$ and is ${1\over 2}({\partial^2\over \partial x^2}+{\partial^2\over \partial y^2})-y 
{\partial^2\over \partial x\partial z}+x{\partial^2\over \partial x\partial y}+{1\over 8}(x^2+y^2){\partial^2\over \partial z^2} $, c.f. Example \ref{example-right}
for an example on the frame bundle that is in the same spirit.
 The third component of the Markov process associated to $\L^\epsilon$ converges to the stochastic area of the limits of the first two components: $\half \int_0^{t}\tilde x_s^\epsilon d\tilde y_s^\epsilon-\half \int_0^{t }\tilde y_s^\epsilon d\tilde x_s^\epsilon$  converges. This means taking stochastic area and taking $\epsilon\to 0$ commute, as expected.
 
This example can extend to the case of a general connection given by 
$\h_{(x,y,z)}(u,v)=(u,v,r_1u+r_2v)$ allowing the functions $r_i$ to depend on $z$. Assuming that  $({\partial r_1 \over \partial y}-{\partial r_2 \over \partial x})^2$ is strictly positive, $\h$ is determined by the operator
$\A^H:=\half  ( {\partial \over \partial x}+r_1{\partial \over \partial z} )^2+\half ( {\partial \over \partial y}+r_2{\partial \over \partial z} )^2$ and $\A$, page 21 Elworthy-LeJan-Li\cite{Elworthy-LeJan-Li-book-2}. Let $X_1={\partial \over \partial x}+r_1{\partial \over \partial z}$ and $X_2={\partial \over \partial y}+r_2{\partial \over \partial z}$.
The first prolongation of $\span\{X_1, X_2\}$, \ie  $\span\{X_1,X_2, [X_1,X_2]\}$ has full rank at each point. 
Consider a function $\alpha$ such that the invariant measure $\mu$ of  $(\gamma-r_1^2-r_2^2){\partial^2\over \partial z}+\alpha {\partial \over \partial z}$ is the standard Gaussian measure. Let 
$\gamma$ be such that $\gamma-r_1^2-r_2^2>c>0$ some $c$.
 The $\A^H+{1\over \epsilon^2}(\gamma-r_1^2-r_2^2){\partial^2\over \partial z}+{1\over \epsilon}\alpha {\partial \over \partial z}$ diffusion converges to an elliptic diffusion
on the Heisenberg group if $r_i$ are not constants a.s. with respect to $\mu$.

\medskip

{\bf Example 3.}    A non-relativistic quantum 
 mechanical diffusion lives naturally in $\R^3\times SO(3)$, the orthonormal frame bundle of $\R^3$. Its spatial projection lives in $\R^3$.  
Studies associated to quantum mechanical equations, mainly the continuity equation describing the probability density of the quantum equation,  have  intertwined structures on $p: \R^3\times SO(3)\to \R^3$. I am grateful to D. Elworthy to bring my attention to the paper
of Wallstrom \cite{Wallstrom} where  limits of stochastic processes in
$\R^3\times SO(3)$ are discussed.
The Bopp-Haag-equations  have one free parameter $I$ and its solutions converge to that of an equation with Pauli Hamiltonian as $I\to 0$. The Bopp-Haag-Dankel stochastic mechanical diffusions $\R^3\times SO(3)$ were introduced by Dankel, describing a diffusion particle with 
definite position and orientation.
The Bopp-Haag -Dankel diffusions on $\R^3\times SO(3)$ are given by a simple SDE with  drift  given by a Pauli spinor (solution of quantum equation associated with Pauli Hamiltonian with parameter $I$). In \cite{Wallstrom} it was shown  that for spin ${1\over 2}$ wave functions and regular potentials the process parametrized by $I$ converge to a Markovian process onto $\R^3$, due to the averaging out of the orientational motion. The spatial projection describes  the spatial motion of the particle without its orientation. 

 

{\bf Example 4.}  Let $G=SO(n)$ and $\pi:\R^n\times G\to \R^n$  the projection to its first component.  Let $\g=\so(n)$ be the Lie algebra of the Lie group $G$.
For each $x\in \R^n$, let
$h_x: T_x\R^n\sim \R^n\to \g$ be a linear map varying smoothly in $x$. The map $(x,v)\mapsto (x, h_x(v))$ can be considered as the horizontal lifting map through $(x,I)$ where $I$ is the identity matrix. 
This induces on $\R^n$ a non-trivial covariant differentiation $\nabla$. 
Let $e\in \R^n$, 
consider the SDE
\[{
\begin{split}
dx_t &=\epsilon_1 g_t  \circ d b_t+\epsilon g_t e\,dt\\
dg_t &= \epsilon_1 h_{x_t^\epsilon} (g_t \circ  d b_t) g_t+\epsilon h_{x_t} (g_t e) g_t dt
+\sqrt \delta \sum_{k=1}^p Z_k(x_t, g_t) \circ d w_t^k+\delta Z_0(x_t, g_t)dt.
\end{split}}
\]
where $( b_t^i,  w_t^k)$  are independent 1-dimensional Brownian motions, $ b_t=( b_t^1, \dots,  b_t^n)$, $ w_t=( w_t^1, \dots,  w_t^p)$, and $Z_k:\R^n\times G\to TG$ with $Z_k(x,g)\in T_gG$. We denote by $\circ $ Stratonovich integration, which must be used in the manifold setting and the correction term should be computed.  When $h=0$ this corresponds to the flat connection. We consider three types of scalings: 
1)  $\delta=1$, $\epsilon_1=\sqrt  \epsilon$ and $\epsilon\to 0$; 
2) $\epsilon_1=\epsilon=1$ and $\delta\to 0$. For the third type take $\epsilon_1=0$, $\epsilon=1$ and $\delta\to \infty$. 
In case 1) it turns out that the solution $x_t$ is a slow variable, despite the involvement of $g_t$. 

%
 
 Let us now describe the model using the language of orthonormal frame bundles. Let 
$v\mapsto \h_{(x,g)}(v)$ be the horizontal lifting map. Then $\h_{(x,g)}(v)=\h_{(x,I)}(v)g$ where $I$ is the identity matrix.
Denote by $h_x(v)$  the lifting map at $(x, I$).  Define $\theta_{(x,g)}(v, w)=g^{-1}(v)$. 
For $(v, A)\in \R^n\times \g$, define $\varpi_{(x,I)}(v,A)=A-h_x(v)$ and
$$\varpi_{(x,g)}(v, Ag)=g^{-1} \varpi_{(x,I)}(v,w)g=g^{-1}Ag-g^{-1}h_x(v)g.$$
The above example is the orthonormal frame bundle,  $\R^n\times SO(n)\to \R^n$, of $\R^n$ with the group $G=SO(n)$ acting on the right of $\R^n\times SO(n)$.  This  describes the local structure of the orthonormal frame bundle of a Riemannian manifold. The frame bundle of
$\R^n$ can also be represented as the Euclidean group. See section \ref{OM-section} for limiting theorems concerning the mentioned SDE in the context of a general manifold.


\section{Homogenisation on the Hopf Fibration}
\label{Hopf-section}
Hopf fibration occurs in multiple situation in physics: in quantum systems and in mechanics, see \eg Urbantke \cite{Urbantke} for an account. 
Hopf fibration is  the principle bundle $\pi: S^3\to S^2$ with $S^1$ acting on the right. Here $S^n$ denotes the $n$-sphere.
As pointed out by M. Berger in 1962, this is a non-trivial collapsing manifold. The sphere  $S^3$ is equipped with a metric inherited from $\R^4$.   The collapsing was achieved by shrinking the length by a scale of $\epsilon$ along the Hopf fibration direction and leaving the orthogonal directions unchanged. In a paper in preparation we study the dynamics associated to collapsing  manifold \cite{Li-Hopf}.

 It is convenient to consider the representation by unitary groups: $S^3$ is identified  with $SU(2)$, $S^1$ with $U(1)$ and $S^2$ with $SU(2)/U(1)$.
A typical element of $SU(2)$ may be expressed as  $(z,w)$, where $z, w\in \C$ are such that $|z|^2+|w|^2=1$, 
or as a matrix  $\left( \begin{array} {cr} z  &-\bar w\\  w  &\bar z
\end{array} \right)$. The right action by $e^{i\theta}\in U(1)$ is  $(z,w)\mapsto (e^{i\theta} z, e^{i\theta}w)$, which can be considered as 
  right multiplication in the group $SU(2)$ by elements of the form $e^{i\theta}\sim (e^{i\theta}, 0)$:
$$\left( \begin{array} {cr} z  &-\bar w\\  w  &\bar z
\end{array} \right)\mapsto \left( \begin{array} {cr} z  &-\bar w\\  w  &\bar z
\end{array} \right)\left( \begin{array} {cc} e^{i\theta}   &0\\ 0  & e^{-i\theta}
\end{array} \right).$$
The Hopf map $\pi: SU(2)\to S^2$ is a submersion,
$$\pi(z,w)=(Re(2z\bar w), Im(2z\bar w), |z|^2-|w|^2).$$
%
The map $T_u \pi$  can be better visualised if $S^3$ is treated as a subset of $\R^4$, writing $z=y_1+iy_2, w=y_3+iy_4$,  $y=(y_1,y_2, y_3, y_4) \in R^4$, 
$$T_y\pi=2\left(\begin{array}{cccc} 
y_3&y_4, &y_1&y_2\\
-y_4&y_3&y_2,&-y_1\\
y_1,& y_2, &-y_3, &-y_4\end{array}\right).$$
The vertical tangent spaces are the kernels of $T\pi$. It is easy to check that he vector field
$V(y_1,y_2, y_3, y_4)=-y_2\partial_1+ y_1\partial_2 -y_4 \partial_3 +y_3\partial_4 $ is vertical.
Back to the principal bundle picture, $V((z,w)):=(iz,iw)$ is the fundamental vertical vector field, associated to $i$ in  the Lie algebra of $U(1)$.  

The Lie algebra $\su(2)$ is the set of 
matrices such that $A+\bar A^T=0$ and with zero trace:
$$\left( \begin{array} {cc} ia  &\beta\\ -\bar \beta   &-ia
\end{array} \right), \qquad a\in \R, \beta\in \C. $$
 We take a real valued inner product on $\su(2)$,  $\<A,B\>:={1\over 2}\trace AB^*$, and  the following orthonormal basis: 
$$X_1=\left( \begin{array} {cc}i  &0\\ 0 &-i
\end{array} \right), \quad X_2=\left( \begin{array} {cc} 0  &-1\\  1&0
\end{array} \right), \qquad X_3=\left( \begin{array} {cc} 0  &i\\i &0
\end{array} \right).$$

Note that $X_1$ is adjoint invariant under the circle action and so is the linear span of $\{X_2, X_3\}$.
Denote by $X_i^*$ the left invariant vector fields associated to $X_i$. Let us define
  a distribution $D=\span\{X_2^*, X_3^*\}$, which is obviously left invariant with respect to the group action on $S^3$. The span of the left invariant vector fields is also right invariant under the circle action. This is due to the fact that $u e^{i\theta}X_i\in D_{u e^{i\theta}}=u(X_ie^{-2i\theta})e^{i\theta} \in D_u e^{i\theta}$ for $i=2,3$. Then $T_uS^3=[\ker T_u\pi]\oplus D_u$ defines  an Ehresmann connection on the principal bundle and a horizontal lifting map.

 Let $\nabla^L$ be the left invariant linear connection and $\nabla$ the Levi-Civita connection for the bi-invariant Riemannian metric on the Lie group $SU(2)$.  Denote by $\Delta$ the Laplacian on $S^3$. Let $\Delta_H=\sum_{i=2}^3\nabla ^Ldf(X_i^*,X_i^*)=\sum_{i=2}^3 \L_{X_i}L_{X_i}$ be the hypoelliptic Laplacian corresponding to the Horizontal distribution generated by the left invariant vector fields $\{X_2^*, X_3^*\}$.

\subsection{Construction of Brownian Motion on $S^2$ by Homogenisation}
 
  Let  $Y_0$ to be  vector in $\span\{X_2, X_3\}$.  
   Since $Y_0e^{i\theta}$ remains in $\span\{X_2, X_3\}$  the SDE  $du_t^\epsilon = u_t^\epsilon Y_0 g_t^\epsilon dt+{1\over \sqrt \epsilon} u_t^\epsilon X_1\circ db_t$  on $S^3$ makes sense
   for any stochastic process $g_t^\epsilon \in U(1)$.  Note that $\{X_1, X_2, X_3\}$ is a Milnor frame \cite{Milnor} with structural constants $(-2,-2,-2)$,
$$[X_1,X_2]=-2X_3, \quad [X_2, X_3]=-2 X_1, \quad [X_3,X_1]=-2X_2.$$
 If $Y_0=c_2X_2+c_3X_3\not =0$, $\span\{X_1, Y_0, [Y_0, X_1]\}=\span\{X_1, X_2, X_3\}$. By the structural equations $[Y_0, X_1]=2c_2X_3-2c_3X_2$ and $\{X_1, Y_0, [Y_0, X_1]\}$
is linearly independent following from the non-degeneracy of the matrix
   $$\left(
\begin{array}{ccc}
1&0&0\\
0  &c_2 &-2c_3\\ 0&c_3&2c_2
\end{array}	\right).$$
It follows that the SDE under discussion is hypoelliptic.

The Hopf map $\pi$ projects a  curve $u_t$ in $SU(2)$ to one in $S^2$. A curve $x_t$ in $S^2$ lifts to a horizontal curve
$\tilde x_t$ in $SU(2)$ through the horizontal lifting map induced by the Ehresmann connection.
  \begin{theorem}
\label{Hopf-theorem}
Let $(b_t)$ be a one dimensional Brownian motion  and take $u_0\in SU(2)$.  Let  $(u_t^\epsilon, g_t^\epsilon)$ be the solution to the following SDE on $SU(2)\times U(1)$,  with $u_0^\epsilon=u_0$
and $g_0^\epsilon=1$,
$$du_t^\epsilon = u_t^\epsilon Y_0 g_t^\epsilon dt+{1\over \sqrt \epsilon} u_t^\epsilon X_1\circ db_t, \qquad dg_t^\epsilon={1\over \sqrt \epsilon} g_t^\epsilon X_1 \circ db_t.$$
Let $x_t^\epsilon=\pi(u_t^\epsilon)$ and $\tilde x_t^\epsilon$ its horizontal lift.  Then  $\tilde  x_{t\over \epsilon}^\epsilon$ converges in probability to the hypoelliptic diffusion with generator  $\bar \L F={1\over 2}|Y_0|^2 \Delta_H$.
If $Y_0$ is a unit vector,  $x_{t\over \epsilon}^\epsilon$ converges in law to the Brownian motion on $S^2$.

\end{theorem}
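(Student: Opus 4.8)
The plan is to strip the noise out of the equation for the horizontal lift and then recognise what remains as a classical two-scale averaging problem. Since $U(1)$ is abelian and one-dimensional the second equation integrates at once to $g_t^\epsilon=\exp(X_1 b_t/\sqrt\epsilon)$. Because $\pi$ is invariant under the right $U(1)$-action, the horizontal lift of $x_t^\epsilon=\pi(u_t^\epsilon)$ through $u_0$ is $\tilde x_t^\epsilon=u_t^\epsilon(g_t^\epsilon)^{-1}$: writing $\tilde x_t^\epsilon=u_t^\epsilon a_t^\epsilon$ with $a_t^\epsilon\in U(1)$, the connection $1$-form of the left-invariant Ehresmann connection extracts the $X_1$-component of $(u_t^\epsilon)^{-1}\circ du_t^\epsilon$, namely $\frac1{\sqrt\epsilon}X_1\circ db_t$, so horizontality forces $(a_t^\epsilon)^{-1}\circ da_t^\epsilon=-\frac1{\sqrt\epsilon}X_1\circ db_t$, i.e. $a_t^\epsilon=(g_t^\epsilon)^{-1}$. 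A short Stratonovich computation then gives
\[(\tilde x_t^\epsilon)^{-1}\circ d\tilde x_t^\epsilon=g_t^\epsilon Y_0\,dt,\]
the martingale parts of $du_t^\epsilon$ and of $d((g_t^\epsilon)^{-1})$ cancelling exactly because they are driven by the same $b$; the right-hand side lies in $\span\{X_2,X_3\}$, confirming horizontality. Thus for each fixed $\epsilon$ the horizontal lift solves a random ODE on $SU(2)$ whose driving field $g_t^\epsilon Y_0=|Y_0|(\cos\phi_t\,X_2-\sin\phi_t\,X_3)$ --- after a fixed rotation of the $(X_2,X_3)$-axes, which we absorb into the orthonormal frame --- oscillates with the fast phase $\phi_t=b_t/\sqrt\epsilon$.

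Next I pass to the slow time scale. Put $\hat x_t^\epsilon:=\tilde x_{t/\epsilon}^\epsilon$ and $\tilde b_t:=\sqrt\epsilon\,b_{t/\epsilon}$; the latter is again a standard Brownian motion, and $\theta_t^\epsilon:=\tilde b_t/\epsilon\bmod 2\pi$ is a Brownian motion on $S^1=\R/2\pi\mathbb Z$ with generator $\frac1{2\epsilon^2}\partial_\theta^2$. In these variables the horizontal-lift ODE reads
\[\frac{d}{dt}\hat x_t^\epsilon=\tfrac1\epsilon\,\hat x_t^\epsilon\,V(\theta_t^\epsilon),\qquad V(\theta)=|Y_0|(\cos\theta\,X_2-\sin\theta\,X_3),\]
so $(\hat x_t^\epsilon,\theta_t^\epsilon)$ is a Markov process on $SU(2)\times S^1$ with generator $\L^\epsilon=\frac1{\epsilon^2}\L_0+\frac1\epsilon\L_1$, where $\L_0=\frac12\partial_\theta^2$ and $\L_1$ is the left-invariant first-order operator $F\mapsto\L_{V(\theta)^*}F$ in the $SU(2)$ variable. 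Here $\L_0$ generates a compact, reversible, ergodic diffusion with invariant measure $\mu=d\theta/2\pi$, and the fast drift is centred: $\int_{S^1}\L_1 F\,d\mu=0$ for every $F\in C^2(SU(2))$, since $\cos$ and $\sin$ average to zero over $S^1$. This is precisely the hypothesis of the standard two-scale homogenisation theorem.

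Then I invoke that theorem (perturbed-test-function method of Papanicolaou--Stroock--Varadhan, equivalently Kurtz's martingale formulation): $\hat x_t^\epsilon$ converges, on $C([0,T];SU(2))$, to the diffusion with generator $\bar\L$ determined by the cell problem $\L_0\chi_F=-\L_1 F$, $\bar\L F=\int_{S^1}\L_1\chi_F\,d\mu$. The corrector $\chi_F$ is an explicit trigonometric polynomial in $\theta$ (a combination of $\cos\theta\,\L_{X_2^*}F$ and $\sin\theta\,\L_{X_3^*}F$); averaging $\L_1\chi_F$ against $\mu$ kills the cross terms (as $\int_{S^1}\cos\theta\sin\theta\,d\theta=0$) and leaves a multiple of $\L_{X_2^*}^2+\L_{X_3^*}^2$, namely $\bar\L F=\frac12|Y_0|^2\Delta_H F$. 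Tightness is automatic since $SU(2)$ is compact; the limit is pinned down by the martingale problem for $\bar\L$, which is well posed because H\"ormander's condition holds: $[X_2^*,X_3^*]=[X_2,X_3]^*=-2X_1^*$, so $\span\{X_2^*,X_3^*,[X_2^*,X_3^*]\}=\span\{X_1^*,X_2^*,X_3^*\}$, whence $\bar\L$ is hypoelliptic. For the final assertion, $\pi$ is a Riemannian submersion with totally geodesic (great-circle) fibres, hence a harmonic morphism, so on basic functions $\Delta_H(f\circ\pi)=\Delta_{S^3}(f\circ\pi)=(\Delta_{S^2}f)\circ\pi$; since $x_{t/\epsilon}^\epsilon=\pi(u_{t/\epsilon}^\epsilon)=\pi(\hat x_t^\epsilon)$, the continuous mapping theorem gives that $x_{t/\epsilon}^\epsilon$ converges in law to the diffusion with generator $\frac12|Y_0|^2\Delta_{S^2}$, which for a unit vector $Y_0$ is the Brownian motion on $S^2$.

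The main obstacle is the rigorous homogenisation step in this degenerate geometric setting: the slow variable $\hat x^\epsilon$ carries no Brownian motion of its own --- all randomness enters through the single fast phase --- so the two limiting Brownian motions in the $X_2^*$- and $X_3^*$-directions are manufactured entirely from the fluctuations of $\frac1\epsilon\int_0^t\cos\theta_s^\epsilon\,ds$ and $\frac1\epsilon\int_0^t\sin\theta_s^\epsilon\,ds$. Representing $\theta^\epsilon$ in law as a circle Brownian motion $\hat W$ run at speed $\epsilon^{-2}$ and using the It\^{o} identity $\int_0^T\cos\hat W_\rho\,d\rho=O(1)-2\int_0^T\sin\hat W_\rho\,d\hat W_\rho$, one must (i) control the corrector and the remainder uniformly in $\epsilon$, (ii) prove the two martingale families converge jointly and become asymptotically decorrelated (their bracket is $O(\epsilon)$), and (iii) since the limiting generator is only hypoelliptic, invoke H\"ormander's theorem for uniqueness of the $\bar\L$-martingale problem. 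Everything else --- tightness, the identification of $\bar\L$, and the projection to $S^2$ --- is then routine. (Note that the limiting diffusions are driven by new randomness, not functionals of the single $b$, so the convergence is weak.)
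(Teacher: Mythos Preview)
Your approach is essentially the same as the paper's: both derive the random ODE $d\tilde x_t^\epsilon=\tilde x_t^\epsilon g_t^\epsilon Y_0\,dt$ for the horizontal lift, then use that $g\mapsto\langle X_j,gY_0\rangle$ are eigenfunctions of $\Delta_{S^1}$ (exactly your cell-problem corrector) to run the perturbed-test-function/martingale-problem argument and average over the Haar measure on $S^1$. Your closing remark that the convergence is only in law is apt and in fact matches what the paper's own proof establishes (tightness plus identification of the limiting martingale problem), the phrase ``in probability'' in the statement notwithstanding; one small caveat is that compactness of $SU(2)$ alone does not give path-space tightness---you still need the modulus-of-continuity estimate, which the paper extracts from the same It\^o expansion (its Lemma~3.2) and which your corrector bounds would equally supply.
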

\begin{proof}
Let $a_t^\epsilon \in S^1$ be such that $u_t^\epsilon=\tilde x_t^\epsilon a_t^\epsilon$ where $\tilde x_t^\epsilon $ is the horizontal lift of $x_t^\epsilon$ through $u_0$
using the connection determined by $\{X_2^*, X_3^*\}$.  Then $a_0^\epsilon=1$ and 
$$d \tilde x_t^\epsilon= TR_{(a_t^\epsilon)^{-1}} \circ  du_t^\epsilon +( (a_t^\epsilon)^{-1} \circ da_t^\epsilon)^*(u_t^\epsilon). $$
All stochastic integration involved in the above equation are Stratonovich integrals. Thus
$$d \tilde x_t^\epsilon= TR_{(a_t^\epsilon)^{-1}} \left( u_t^\epsilon Y_0 g_t^\epsilon dt+{1\over \sqrt \epsilon} u_t^\epsilon X_1\circ db_t\right) +
( (a_t^\epsilon) \circ d(a_t^\epsilon)^{-1})^*(\tilde x_t^\epsilon). $$
Since $\tilde x_t^\epsilon$ is horizontal, $\omega(d\tilde x_t^\epsilon)=0$, we obtain
$$(a_t^\epsilon) \circ d(a_t^\epsilon)^{-1}=-\varpi_{\tilde x_t^\epsilon}\left({1\over\sqrt \epsilon} u_t^\epsilon X_1 (a_t^\epsilon)^{-1}\circ db_t\right)=-{1\over\sqrt \epsilon} a_t^\epsilon X_1 (a_t^\epsilon)^{-1}\circ db_t .$$
It follows that $d(a_t^\epsilon)^{-1}  =-{1\over\sqrt \epsilon} a_t^\epsilon X_1 (a_t^\epsilon)^{-1}\circ db_t$, $(a_t^\epsilon)=(g_t^\epsilon)$, and
$$d \tilde x_t^\epsilon= u_t^\epsilon Y_0  dt+{1\over \sqrt \epsilon} u_t^\epsilon X_1(g_t^\epsilon)^{-1} \circ db_t
-{1\over \sqrt\epsilon} \tilde x_t^\epsilon a_t^\epsilon X_1 (a_t^\epsilon)^{-1}\circ db_t=\tilde x_t^\epsilon g_t^\epsilon Y_0dt. $$
Since there is no Stratonovich correction term for $dg_t= g_t  X_1  \circ db_t$, the corresponding infinitesimal  generator is ${1\over 2}\Delta_{S^1}$ where $ \Delta_{S^1}$ 
is the Laplacian on $S^1$. Let $F: S^3\to \R$ be any smooth function. Since $Y_0\in \span\{X_2, X_3\}$,
$${\begin{split}F(\tilde x_t^\epsilon)&=F(u_0)+\int_0^t dF(\tilde x_s^\epsilon g_s^\epsilon Y_0)ds
=F(u_0)+\sum_{j=2}^3\int_0^t dF( \tilde x_s^\epsilon X_j)\<\tilde x_s^\epsilon X_j\,\tilde x_s^\epsilon g_s^\epsilon Y_0\>ds\\
&=F(u_0)+\sum_{j=2}^3\int_0^t dF( \tilde x_s^\epsilon X_j)\<X_j, g_s^\epsilon Y_0\>ds.
\end{split}}$$
The two real valued functions on $S^1$, $g\mapsto \<X_2, gY_0\>$ and  $g\mapsto \<X_3,gY_0\>$, are eigenfunctions of $\Delta_{S^1}$.  Then for  $j=2,3$,
$${\begin{split} &dF(\tilde x_t^\epsilon X_j )\<X_j, g_t^\epsilon Y_0\> -dF(u_0X_j)\<X_j,Y_0\>
\\&= \sum_{k=2}^3\int_0^t \nabla^L dF(\tilde x_s^\epsilon X_k, \tilde x_s^\epsilon X_j)
\<X_j, g_s^\epsilon Y_0\>\<X_k, g_s^\epsilon Y_0\>ds 
+{1\over\sqrt \epsilon}\int_0^t dF(\tilde x_s^\epsilon X_j ) \<X_j, g_s^\epsilon X_1Y_0\> db_s\\
&+{1\over \epsilon} \int_0^t dF(\tilde x_s^\epsilon X_j ) \<X_j, g_s^\epsilon X_1^2Y_0\>ds.
\end{split}}.$$
Applying the identities $X_i^2=-I$ we obtain
\begin{equation}\label{Ito-tight}
{\begin{split}F(\tilde x_t^\epsilon)
&=F(u_0)-\epsilon \sum_{j=2}^3\left(dF(\tilde x_t^\epsilon X_j )\<X_j, g_t^\epsilon Y_0\> -dF(u_0X_j)\<X_j,Y_0\>\right)\\&
+\epsilon \sum_{j,k=2}^3\int_0^t \nabla^L dF(\tilde x_s^\epsilon X_k, \tilde x_s^\epsilon X_j)
\<X_j, g_s^\epsilon Y_0\>\<X_k, g_s^\epsilon Y_0\>ds\\
&+ \sqrt \epsilon \sum_{j=2}^3\int_0^t dF(\tilde x_s^\epsilon X_j ) \<X_j, g_s^\epsilon X_1Y_0\> db_s.
\end{split}}\end{equation}
Since  $F$ is a smooth function on compact manifolds, the probability distribution of $\{\tilde x_{t\over \epsilon}^\epsilon, \epsilon>0\}$ is tight, see Lemma \ref {Hopf-lemma-tight} below.  
We now move to the canonical probability space with the 
standard filtration $\F_t$.  By Lemma \ref{Hopf-convergence} below,  conditioning on the filtration $\F_s$, the canonical filtration  on the canonical probability space, 
$$\epsilon \sum_{j,k=2}^3   \int_{r\over \epsilon}^{t\over \epsilon}\nabla^L dF(\tilde x_s^\epsilon X_k, \tilde x_s^\epsilon X_j)
\<X_j, g_s^\epsilon Y_0\>\<X_k, g_s^\epsilon Y_0\>ds$$
converges to $$ \sum_{j,k=2}^3\int_r^t\nabla^L dF(uX_k, uX_j)  \int_{S^1} \<X_j, g Y_0\> \<X_k, g Y_0\> dg ds.$$
Here $dg$ is the Haar measure on $S^1$. It is easy to check that $$\int_{S^1} \<X_2, g Y_0\> \<X_3, g Y_0\> dg=0,$$
either by direct computation or note that there is $g'\in U(1)$ such that $g'X_2=-X_2$ and $g'X_3=X_3$ and using the
translation invariance of the Haar measure. Since there is an element of $S^1$ that maps $X_2$ to $X_3$, 
$$ \int_{S^1} \<X_2, g Y_0\>^2  dg=\int_{S^1} \<X_3, g Y_0\>^2 dg.$$
Note that 
 $$\sum_{j=1}^2 \int_{S^1} \<X_j, g Y_0\> \<X_j, g Y_0\> dg=|gY_0|^2=|Y_0|^2.$$
We conclude that  $\tilde x_{t\over \epsilon}^\epsilon$ converges in distribution and its law is determined by the generator $\bar \L F(u)={1\over 2}|Y_0|^2  \nabla^L dF(uX_2, uX_2)+{1\over 2} |Y_0|^2  \nabla^L dF(uX_3, uX_3)$. 
Since $\nabla^L X_i^*=0$ for $i=2,3$, $\sum_{i=2}^3 \nabla^L d(f\circ \pi)(Y_i^*,Y_i^*)=\trace \nabla df $. Note also the Riemannian metric on $S^2$ is that induced from $S^3$, the process $x_t^\epsilon$ has generator ${1\over 2}|Y_0|^2 \Delta_{S^2}$ and is a Brownian motion when $Y_0$ is a unit vector. 
\end{proof}

 \begin{lemma}
 \label{Hopf-lemma-tight}
Let  $\mu^\epsilon$ be the probability distributions of  the stochastic processes $(\tilde x^\epsilon_{t\over \epsilon}, t\ge 0 )$ from the theorem.
 Then $\{\mu^\epsilon, \epsilon>0\}$ is relatively compact. 
 \end{lemma}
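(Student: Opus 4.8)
The plan is to reduce tightness of the $S^3$-valued processes $(\tilde x^\epsilon_{t/\epsilon})$ to tightness of a finite family of scalar observables, and then to read off the required moment bounds from the It\^o expansion \eqref{Ito-tight}. Since $S^3=SU(2)$ is compact, it suffices to show that for each $F$ in a finite collection of smooth functions separating the points of $S^3$ --- for instance the four coordinate functions of the embedding $S^3\subset\R^4$ used above --- the family $\{F(\tilde x^\epsilon_{\cdot/\epsilon}):\epsilon>0\}$ is tight in $C([0,T],\R)$ for every $T$; patching over $T\uparrow\infty$ and using compactness of the state space then yields relative compactness of $\{\mu^\epsilon\}$ on $C([0,\infty),S^3)$ (equivalently, by Prohorov's theorem, tightness there).

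So I fix such an $F$ and substitute $t/\epsilon$ for $t$ in \eqref{Ito-tight}. This presents $F(\tilde x^\epsilon_{t/\epsilon})$ as $F(u_0)$ plus three terms in which the time dilation by $1/\epsilon$ is exactly compensated by the $\epsilon$, $\epsilon$ and $\sqrt\epsilon$ prefactors: a boundary term $\epsilon\,B^\epsilon_t$, where $B^\epsilon_t$ is a bounded function of $(\tilde x^\epsilon_{t/\epsilon},g^\epsilon_{t/\epsilon})$; a drift term $A^\epsilon_t=\epsilon\sum_{j,k=2}^3\int_0^{t/\epsilon}\nabla^L dF(\tilde x^\epsilon_s X_k,\tilde x^\epsilon_s X_j)\<X_j,g^\epsilon_s Y_0\>\<X_k,g^\epsilon_s Y_0\>\,ds$; and a martingale term $M^\epsilon_t=\sqrt\epsilon\sum_{j=2}^3\int_0^{t/\epsilon}dF(\tilde x^\epsilon_s X_j)\<X_j,g^\epsilon_s X_1 Y_0\>\,db_s$. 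The structural point is that the identity \eqref{Ito-tight}, obtained via $X_i^2=-I$ together with an integration by parts, has already traded the a priori $O(1/\epsilon)$-rescaled drift for these $\epsilon$- and $\sqrt\epsilon$-sized pieces, whose integrands are bounded uniformly in $\epsilon$ by a constant $C_F$ because $F$ is smooth and $SU(2)$, $S^1$ are compact.

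I would then handle the three pieces separately. The boundary term satisfies $|\epsilon B^\epsilon_t|\le C_F\,\epsilon$, so it converges to $0$ uniformly on $[0,T]$ and contributes nothing to the modulus of continuity (alternatively it is absorbed directly by Aldous' criterion). The drift satisfies $|A^\epsilon_t-A^\epsilon_s|\le C_F\,|t-s|$ for all $\omega$, so the $A^\epsilon$ are, uniformly in $\epsilon$ and $\omega$, Lipschitz and bounded on $[0,T]$, hence their laws are supported on a fixed compact equi-Lipschitz subset of $C([0,T],\R)$ and are tight by the Arzel\`a--Ascoli theorem. The martingale $M^\epsilon$ is continuous with $\<M^\epsilon\>_t-\<M^\epsilon\>_s=\epsilon\int_{s/\epsilon}^{t/\epsilon}\big(\sum_{j=2}^3 dF(\tilde x^\epsilon_r X_j)\<X_j,g^\epsilon_r X_1 Y_0\>\big)^2\,dr\le C_F'\,|t-s|$, so by the Burkholder--Davis--Gundy inequality $\E\,|M^\epsilon_t-M^\epsilon_s|^4\le C\,|t-s|^2$ uniformly in $\epsilon$, and the Kolmogorov--Chentsov criterion gives tightness of $\{M^\epsilon\}$ in $C([0,T],\R)$. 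Summing the three contributions yields tightness of $\{F(\tilde x^\epsilon_{\cdot/\epsilon})\}$, and hence the claim.

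The only point that genuinely needs care --- more a matter of bookkeeping than a real obstacle --- is precisely that the time-change by $1/\epsilon$ does not spoil the moment estimates; this is why one must argue from the expanded identity \eqref{Ito-tight} rather than from the naive generator, the cancellation of the rescaled drift against itself being what produces the $\epsilon$ and $\sqrt\epsilon$ gains. The two remaining steps --- passing from tightness of the coordinate observables to tightness of the $S^3$-valued process (immediate since $S^3$ is compact) and from $C([0,T],S^3)$ to $C([0,\infty),S^3)$ --- are standard.
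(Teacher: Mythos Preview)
Your argument is correct and, like the paper's, rests on the expanded identity \eqref{Ito-tight} together with compactness of $SU(2)$; the difference is in how the modulus of continuity is extracted. The paper applies \eqref{Ito-tight} to a single smoothed distance function $\phi\circ\tilde\rho(u,\cdot)$ and then invokes the Markov (cocycle) property to pass from a one-point estimate $\E\sup_{s\le\delta}\phi^2(y^\epsilon_s,u)\le C\epsilon(1+\delta)$ to the two-point increment bound $\E\sup_{|s-t|\le\delta}\phi^2(y^\epsilon_s,y^\epsilon_t)\le C\epsilon(1+\delta)$. You instead apply \eqref{Ito-tight} to the coordinate functions of the embedding $S^3\subset\R^4$, split the result into a vanishing boundary term, a deterministically Lipschitz drift, and a martingale with uniformly bounded quadratic variation density, and conclude via BDG and Kolmogorov--Chentsov. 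Your route is slightly longer but more modular: it avoids the cocycle step and makes explicit that the martingale part, not just the drift, scales correctly under the $t\mapsto t/\epsilon$ substitution. The paper's route is shorter and yields an increment estimate directly in the intrinsic distance, which is convenient when one later wants to localise (as in the non-compact setting of Theorem~\ref{level-thm}). Both are valid here because the state space is compact.
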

 \begin{proof}
Write $y_t^\epsilon=\tilde x^\epsilon_{t\over \epsilon}$ for simplicity. 
Let $\mu_n$ be a subsequence from $\{\mu_\epsilon\}$ corresponding to a sequence of numbers $\epsilon_n$. We wish to prove that it has a weakly convergent subsequence.  It is sufficient to prove that the family of measures $\mu_n$ is tight, \ie for every $\delta>0$  there exists a compact set $K_\delta\subset M$ such that $\mu_n(K_\delta)>1-\delta$ for all $n$.  As  probability measures on the space of continuous paths on $M$,  $\mu_n(\sigma: \sigma(0)=y_0)=1$ where $\sigma:[0,1]\to M$ is a continuous path on $M$. 
For any $y_1, y_2\in M$,  let $\phi:M\times M\to \R$ be a smooth function that agrees with the Riemannian distance function when $d(y_1, y_2)<a/2$ where $a$ is the injectivity radius of $M$ and $\phi(y_1,y_2)=1$ when $d(y_1, y_2)>2a$. This is possible by taking $\phi=\alpha\circ d$ where $\alpha:\R_+\to \R$ is a suitable bump function with $\alpha$ the identity function on $[0,a/2]$. Then $\phi$ is a distance function on $M$ that generates the same topology as  $d$. The family of measures $\{
\mu_n\}$  is tight
if   for any  $a, \eta>0$ there exists $0<\delta<1$ such that there is an $\epsilon_0>0$, with
$$\P\left (\omega:  \sup_{|s-t|<\delta} \phi(y_s^{\epsilon_n}, y_t^{\epsilon_n})>a\right)<\eta, \quad \hbox{when }  \epsilon<\epsilon_0.$$
In the proof of the Theorem, take $F(y)=\phi(y,u)$. Then by formula (\ref{Ito-tight}),
$$\E \sup_{s\le \delta}\phi^2(y_s^{\epsilon}, u)
\le  \phi^2(y_0^\epsilon,u)+C\epsilon+ \epsilon \delta$$
for come constant $C$. Let $\phi_t^\epsilon(y, \omega))$ denote $y_\cdot^\epsilon(\omega)$ with $y_0^\epsilon(\omega)=y$. Let $\theta_s$ denotes the shift operator in the Wiener space. By the Cocycle property, for $s<t$, 
$${\begin{split}
\E \sup_{|s-t|\le \delta}\phi^2(y_s^{\epsilon_n}, y_t^{\epsilon_n})
&=\E  \E\{  \sup_{|s-t|\le \delta} \phi^2( z, \phi_{t-s}^\epsilon(z, \theta_{t-s}(\omega))) | y_s^{\epsilon_n}=z\}
\le C\epsilon+ \epsilon \delta
\end{split}}$$
and the required tightness holds.
 \end{proof}
 
 Let $y_t^n$ be a family of Markov processes on a Riemannian manifold $M$ that is relatively compact. Represent this as the coordinate process on path space
 with measure $\mu^n$, the distribution of $y_\cdot^\epsilon$. Suppose that $\mu_n$ converges weakly to $\bar \mu$.  Let $F: M\to \R$ be a smooth function with compact support. Let $\A$ be a diffusion operator. Suppose that
 $$\int f \left( F(X_t)-F(X_0)-\int_s^t\A F(X_r)dr\right) d\mu_n\to 0$$ for any function $f$ that is measurable with respect to $\F_s$ where $(\F_s, s\ge 0)$ is the canonical filtration. Then  $\bar \mu$  is the probability distribution of a $\A$-diffusion. In fact letting 
 $M_t^F=F(X_t)-F(X_0)-\int_s^t\A F(X_r)dr$, then $M_t^F$ is a $\mu$ martingale and $\mu$ is the solution to the martingale problem associated to $\A$.
 The following lemma reflects this philosophy. let $\epsilon_n$ be a sequence converges to zero. 
We are interested in the term  $$\epsilon \sum_{j,k=2}^3   \int_{r\over \epsilon}^{t\over \epsilon}\nabla^L dF(\tilde x_s^\epsilon X_k, \tilde x_s^\epsilon X_j)
\<X_j, g_s^\epsilon Y_0\>\<X_k, g_s^\epsilon Y_0\>ds
 . $$

\begin{lemma}\label{Hopf-convergence}
Let $(y_t^\epsilon,h_t^\epsilon)$ be a family of $SU(2)\times U(1)$ valued stochastic processes  on a probability space such that the law of $(y_t^\epsilon,h_t^\epsilon)$ agrees with that of $(\tilde x_{t\over \epsilon}^{\epsilon}, g_t^{\epsilon})$ in Theorem \ref{Hopf-theorem}. Let $(\tilde x_{t\over \epsilon_n}^{\epsilon_n}, g_t^{\epsilon_n})$ be a  weakly convergent sequence.  Let $y^n_t:= y_t^{\epsilon_n}$. We may assume that $y_\cdot ^n$ converges to $\bar y_\cdot $ almost surely.
Let $F: SU(2)\to \R$ be a smooth function. Define 
$${\begin{split} \bar \L F(u)&=\sum_{j,k=2}^3 \nabla^L dF(uX_k, uX_j)  \int_{S^1} \<X_j, g Y_0\> \<X_k, g Y_0\> dg,\\
\A^\epsilon F(y_s^\epsilon, g_s^\epsilon) &= \sum_{j,k=2}^3  \nabla^L dF(y^\epsilon_{s\epsilon} X_k, y_{s\epsilon} ^\epsilon X_j)
\<X_j, g_s^\epsilon Y_0\>\<X_k, g_s^\epsilon Y_0\>.
\end{split}} $$
Then  the following convergence holds in $L^1$,
$$\epsilon \int_{s\over \epsilon}^{t\over \epsilon} \A^\epsilon F(y_r^\epsilon, g_r^\epsilon) dr\to \int_s^t \bar \L F(y_r^\epsilon) dr,$$
and for any real valued bounded function $\phi$   on the path space,
$$\E \phi(y_r^n, r\le s) \left(  F(y_t^n)-F(y_s^n)-\int_s^t \bar \L F(y_r^n) dr \right) \to 0.$$
\end{lemma}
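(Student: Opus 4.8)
The plan is to exploit the fact that the $g^\epsilon_t$ motion is the fast variable and runs on the circle $S^1$ with generator $\frac12\Delta_{S^1}$, which is ergodic with the Haar measure $dg$ as unique invariant measure, while $y^\epsilon_t=\tilde x^\epsilon_{t/\epsilon}$ is the slow variable whose drift is already killed on the horizontal level. The first step is to recall from the proof of Theorem \ref{Hopf-theorem} that $\A^\epsilon F(y^\epsilon_s,g^\epsilon_s)$ is a bounded function (since $F$ is smooth on a compact manifold and $\nabla^L dF$, the vectors $X_j$, and $Y_0$ are all bounded), so all the time integrals make sense and are uniformly bounded. Then I would establish the $L^1$-convergence $\epsilon\int_{s/\epsilon}^{t/\epsilon}\A^\epsilon F(y^\epsilon_r,g^\epsilon_r)\,dr\to\int_s^t\bar\L F(\bar y_r)\,dr$ by freezing the slow variable on a partition: split $[s/\epsilon,t/\epsilon]$ into blocks of length $\tau/\epsilon$ for a small auxiliary $\tau$, on each block replace $y^\epsilon_r$ by its value at the left endpoint (the error controlled by the modulus of continuity of $r\mapsto y^\epsilon_r$, which is uniformly small by the tightness estimate of Lemma \ref{Hopf-lemma-tight}, in fact by the explicit bound $\E\sup_{s\le\delta}\phi^2(y^\epsilon_s,u)\le C\epsilon+\epsilon\delta$ reinterpreted on the sped-up scale), and then on each block invoke the ergodic theorem for the fast circle motion — conditionally on $\F$ at the left endpoint, $g^\epsilon$ started afresh (Markov/cocycle property) spends, over a long time $\tau/\epsilon$, an asymptotically Haar-distributed fraction of time in each region, so the time average of $\<X_j,g^\epsilon_r Y_0\>\<X_k,g^\epsilon_r Y_0\>$ converges to $\int_{S^1}\<X_j,gY_0\>\<X_k,gY_0\>\,dg$. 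Summing the blocks recovers $\int_s^t\bar\L F(\bar y_r)\,dr$ after letting first $\epsilon\to 0$ then $\tau\to 0$; uniform integrability is automatic from the boundedness of $\A^\epsilon F$.

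For the quantitative ergodic input I would note that $g\mapsto\<X_2,gY_0\>$ and $g\mapsto\<X_3,gY_0\>$ are genuine eigenfunctions of $\Delta_{S^1}$ (as observed in the proof of the theorem), hence their products are finite linear combinations of eigenfunctions, and for each such eigenfunction $\psi$ with eigenvalue $-\lambda<0$ one has the explicit estimate $\E\big|\epsilon\int_{s/\epsilon}^{t/\epsilon}\psi(g^\epsilon_r)\,dr-(t-s)\int_{S^1}\psi\,dg\big|^2=O(\epsilon)$ obtained from $\int_0^T\psi(g_r)\,dr=\lambda^{-1}(\psi(g_0)-\psi(g_T))+\lambda^{-1}\int_0^T(\text{martingale term})$, i.e. solving the Poisson equation on $S^1$ explicitly. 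This makes the "freeze-and-average" argument of the first step rigorous without appealing to any abstract homogenisation machinery, and it is the technically cleanest route given that everything lives on $S^1$.

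The second displayed convergence, $\E\,\phi(y^n_r,r\le s)\big(F(y^n_t)-F(y^n_s)-\int_s^t\bar\L F(y^n_r)\,dr\big)\to 0$, then follows by combining the $L^1$-convergence just proved with the identity (\ref{Ito-tight}) from the theorem (written on the time scale $t/\epsilon$): that identity expresses $F(y^\epsilon_t)-F(y^\epsilon_s)$ as $\epsilon\int\A^\epsilon F\,dr$ plus a boundary term of size $O(\epsilon)$ plus a stochastic integral of size $O(\sqrt\epsilon)$ in $L^2$. Multiplying by the bounded functional $\phi(y^n_r,r\le s)$, taking expectations, and using $\E|\sqrt\epsilon\int\cdots db|\le\sqrt\epsilon\,(\E|\int\cdots db|^2)^{1/2}=O(\sqrt\epsilon)$ kills the martingale and boundary contributions, while the main term converges by Step 1 (and by the assumed a.s. convergence $y^n_\cdot\to\bar y_\cdot$ together with continuity of $\bar\L F$, so $\int_s^t\bar\L F(y^n_r)\,dr\to\int_s^t\bar\L F(\bar y_r)\,dr$ by bounded convergence).

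The main obstacle is the first step: making the "replace the slow variable by its frozen value on each block and average the fast variable" argument quantitative in a way that survives the double limit. The delicate point is that the fast variable $g^\epsilon$ is \emph{not} independent of the slow variable's past — one must use the cocycle/Markov property on the canonical space to restart $g^\epsilon$ conditionally on $\F_{s/\epsilon}$, and one needs the ergodic averaging error over a block of length $\tau/\epsilon$ to be small uniformly (here the explicit eigenfunction estimate $O(\epsilon/\tau)$ per block, times $O(1/\tau)$ blocks, giving $O(\epsilon/\tau^2)\to 0$ for fixed $\tau$) while simultaneously the freezing error over a block is $O(\tau)$-size by path continuity. Balancing these — $\epsilon\to 0$ first, then $\tau\to 0$ — is the crux; once it is done, Step 2 is routine.
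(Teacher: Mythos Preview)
Your proposal is correct and follows essentially the same freeze-and-average strategy as the paper: partition the time interval, freeze the slow variable $y^\epsilon$ on each block, and average the fast circle motion $g^\epsilon$ via ergodicity, then combine with identity~(\ref{Ito-tight}) for the martingale-problem statement. The only differences are bookkeeping --- the paper takes the mesh $\Delta t_i=\sqrt{\epsilon}$ in a single limit and invokes Birkhoff's ergodic theorem on $S^1$, whereas you decouple via an auxiliary $\tau$ and a double limit and replace Birkhoff by the explicit Poisson-equation (eigenfunction) estimate, which is arguably cleaner since it yields a quantitative rate directly.
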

\begin{proof}
By formula (\ref{Ito-tight}),
$$ {\begin{split}
F(y_t^n)-F(y_s^n)-\int_s^t \bar \L F(y_r^n) dr =&\epsilon \sum_{j=2}^3 dF(y_{t\epsilon}^\epsilon X_j )\<X_j, g_t^\epsilon Y_0\> -\epsilon \sum_{j=2}^3 dF(y_{s\epsilon}^\epsilon X_j )\<X_j, g_s^\epsilon Y_0\>\\
& +\epsilon\int_{s\over \epsilon}^{t\over \epsilon} \A^n F(y_r^\epsilon, g_r^\epsilon) dr-\int_s^t \bar \L F(y_r^n) dr.\end{split}}$$
It is sufficient to prove that $$\epsilon\int_{s\over \epsilon}^{t\over \epsilon} \A^n F(y_r^\epsilon, g_r^\epsilon) dr\to \int_s^t \bar \L F(y_r^n) dr.$$
Let $t_0=s<t_1<\dots <t_n=t$ be a division of $[s,t]$ with appropriate scale. Let  $\Delta t_t=t_{i+1}-t_i$.  Assume that $\Delta t_i=\sqrt \epsilon$ so ${\Delta t_i\over \epsilon}={1\over \sqrt \epsilon}$ is large.  On each interval $[{t_i\over \epsilon}, {t_{i+1}\over \epsilon}]$,
$$| \nabla^L dF(y^\epsilon_{s\epsilon} X_k, y_{s\epsilon} ^\epsilon X_j)-
\nabla^L dF(y^\epsilon_{t_i\epsilon} X_k, y^\epsilon_{t_i \epsilon } X_j) |\le C|y^\epsilon_{s\epsilon}-y^\epsilon_{t_i\epsilon}| \sim o(\sqrt\epsilon \Delta t_i),$$
where $\sim$ means in the order of, after taking expectations. Let $dg$ be the Haar measure on $S^1$. It is the invariant measure for 
$g_{t\epsilon}^\epsilon$ where $g_t^\epsilon$ is solution to  $dg_t^\epsilon={1\over \epsilon} g_t^\epsilon X_1 dt$. By Birkhoff's  ergodic theorem on $S^1$ we obtain,
$${\begin{split} 
&\epsilon\int_{s\over \epsilon}^{t\over \epsilon} \nabla^L dF(y^\epsilon_{s\epsilon} X_k, y_{s\epsilon} ^\epsilon X_j)
\<X_j, g_s^\epsilon Y_0\>\<X_k, g_s^\epsilon Y_0\>ds\\
 &\sim  \sum_i  \Delta  t_i     \nabla^L dF(y^\epsilon_{t_i\epsilon} X_k, y^\epsilon_{t_i \epsilon } X_j) 
{\epsilon \over \Delta t_i } \int_{t_i\over \epsilon}^{t_{i+1}\over \epsilon}  \<X_j, g_s^\epsilon Y_0\>\<X_k, g_s^\epsilon Y_0\>ds\\
&\sim   \sum_i    \Delta  t_i   \nabla^L dF(y^\epsilon_{t_i\epsilon} X_k, y^\epsilon_{t_i \epsilon } X_j) 
\int_{S^1}  \<X_j, gY_0\>\<X_k, gY_0\>dg\\
&\sim \int_{s\over \epsilon}^{t\over \epsilon} \nabla^L dF(y^\epsilon_{r\epsilon} X_k, y^\epsilon_{r \epsilon } X_j) dr\int_{S^1}  \<X_j, gY_0\>\<X_k, gY_0\>dg\\
&\sim \int_s^t  \nabla^L dF(y^\epsilon_{r} X_k, y^\epsilon_{r  } X_j) dr\int_{S^1}  \<X_j, gY_0\>\<X_k, gY_0\>dg.
\end{split}} $$
\end{proof}

\section{Perturbed Systems On Principal  Bundles}\label{OM-section}
Let $M$ be a smooth finite dimensional manifold. 
The  frame bundle $\pi: FM\to M$ is a principal bundle with group action $GL(n,\R)$. Its total space is the collection of all linear isomorphisms $u:\R^n\to T_{\pi(u)}M$. Given a Riemannian metric on $M$, the orthonormal frame bundle $\pi: OM\to M$ is a reduced bundle with group action $O(n)$  and the fibre at $u:\R^n\to T_{\pi(u)}M$ consisting of isometric linear maps. 
 The total space of the frame bundle or  the orthonormal frame bundle is a manifold in its own right.
 If $M$ is oriented the orthonormal frame bundle consists of two components in which case we only need to consider the action by the component $SO(n)$ of the group that contains the identity. We assume that $n>1$. In all cases the group will be denoted by $G$, its Lie algebra by $\g$ and the right action  by $a\in G$ is denoted by $R_a$. In case of $G=SO(n)$,  $\g=\so(n)$ is the space of skew symmetric matrices. 

Denote by $TOM$ the tangent space of  $OM$ and by $VT_uOM$ the naturally defined vertical sub-bundle, $VT_uOM=\ker[T_u\pi]$.  
If $A$ belongs to the Lie algebra $\so(n)$, denote by  $A^*$  the  fundamental vertical vector field on $OM$ induced by right multiplication,
 $$A^*(u)={d\over dt}|_{t=0} u\exp(tA).$$ 
 Then an o.n.b of $\so(n)$ induces a family of vertical vector fields that spans $VTOM$ and $VTOM$ is an integrable sub-bundle of the tangent bundle .

A connection $\nabla$ on the tangent space of $M$ induces a splitting of the tangent spaces of $T_uOM$:
$$T_uOM=HT_uOM\oplus VT_uOM.$$
Let $HTOM=\sqcup_u HT_uOM$ and $VTOM=\sqcup_u  VT_uOM$. Then  $HTOM$ is a right invariant distribution and the splitting is
an Ehresmann connection of $TOM$.
Tangent vectors or vector fields are called horizontal (respectively vertical) if they take vales in $TOM$ (respectively in $VTOM$).
This determines a linear connection $\nabla$ on $M$, a connection 1-form $\varpi_u: T_uOM\to \so(n)$, 
and a horizontal lifting map $\h_u: T_{\pi(u)}M\to T_uOM$.

To each $e\in \R^n$, there is associated a  standard  (or basic) horizontal vector field  $H(e)$ given by 
$u\mapsto H(u)(e)\equiv \h_u(ue)$.
For $A\in \so(n)$, $[H(e),A^*]$ is a horizontal vector field.  For $e, \tilde e\in \R^n$   the vertical part of $[H(e),H(\tilde e)]$ is given by the curvature form $\Omega$, $[H(e),H(\tilde e)]=2\Omega(H(e),H(\tilde e))$.
Let $\{e_i\}$ be an orthonormal basis of $\R^n$ and define $H_i=H(u)(e_i)$.

Let $(\Omega, \F, \F_t,P)$ be a filtered probability space with the usual assumptions and let $\{ w_t^j,  b_t^l, 1\le j\le p, 1\le l\le m\}$ be independent one dimensional Brownian motions.
Let $ w_t=( w_t^1,\dots,  w_t^m)$  and $ b_t=( b_t^1,\dots,  b_t^p)$.
 Let   $\{\X_l, l=0,1,2,\dots, m\}$  be a family of horizontal vector fields and $\{Z_j, j=0,1,\dots p\}$  
a family of vertical vector fields.
Consider the SDE
\begin{equation}\label{averaging-0}\left\{
\begin{array}{ll}
du_t^\epsilon&=\sqrt \epsilon_1 \sum_{l=1}^m \X_l(u_t^\epsilon)\circ d b_t^l+\epsilon \X_0(u_t^\epsilon)dt+\sqrt \delta  \sum_{j=1}^p Z_j(u_t^\epsilon)\circ d w_t^{j}+\delta Z_0(u_t^\epsilon)dt,\\
u_0^\epsilon&=u_0.
\end{array}\right.
\end{equation} 
here $\epsilon_1, \epsilon $ and $\delta$ are parameters.
The infinitesimal generator of the SDE is ${1\over 2}\epsilon_1 \sum_{l=1}^m \X_l^2+\epsilon \X_0+\delta \sum_{j=1}^m Z_j^2+\delta Z_0$.

%
%
%

  \subsection{Perturbation to Vertical Flows}
  \label{section-vertical}
A Riemannian connection $\nabla$ on $M$ is a connection that is compatible with the Riemannian metric, with possibly a non-vanishing torsion $\T$. We take the horizontal bundle  on the principle bundle induced by this connection. We will assume that the connection is complete, i.e. every geodesic extends to all finite time parameter. This is so if every standard horizontal vector field is complete.
 Let $\varpi: T_uOM\to \so(n)$ be the connection 1-form, corresponding to the given Riemannian connection $\nabla$, which is determined by adjoint invariance and its values on fundamental vertical vector fields:
  $(R_g)^*\varpi=\ad(g^{-1})\varpi$ and $\varpi(A^*)\equiv A$.   
  Let  $\theta_u: T_uOM\to \R^n$ be the canonical 1-form such that $\theta_u (\h_u(ue))=e$.
 Let $\breve \nabla$ be the direct sum connection on $TOM$. For any vector  $v\in T_uOM$  
  and vector field $U$ on $OM$, 
  $$\breve\nabla_v U= \varpi^{-1}d(\varpi (U))(v)+\theta^{-1}d(\theta(U))(v).$$
  This connection $\breve \nabla$ has zero curvature and a non-vanishing torsion in general. A formula for the torsion of $\breve \nabla$ is given in 
  Li \cite{Li-OM-2}.

In  (\ref{averaging-0}) take $\epsilon_1=\epsilon$ and $\delta=1$.
Let $\tilde L^\epsilon=\L_0+\epsilon \L_1$ where
$$ \L_1={1\over 2}\sum_{l=1}^m L_{\X_l}L_{\X_l}+L_{\X_0}, \L_0={1\over 2}\sum_{j=1}^p L_{Z_j}L_{Z_j}+\L_{Z_0}.$$

\begin{theorem}
\label{level-thm}
Assume that $M$  has positive injectivity radius,  $\{ \varpi_u [ Z_j(u)]\}_{j=1}^m$ spans $\g$, and the vector fields  $\{\X_l, l\ge 0\}$  and $\{ |\breve\nabla _{\X_l} \X_l|, \l\ge 1\}$ have linear growth.  Let $u_t^\epsilon$ be a solution with initial value $u_0\in OM$, to the SDE
\begin{equation}\label{averaging}
du_t^\epsilon=\sqrt \epsilon \sum_{l=1}^m \X_l(u_t^\epsilon)\circ d b_t^l+\epsilon \X_0(u_t^\epsilon)dt+ \sum_{j=1}^p Z_j(u_t^\epsilon)\circ d w_t^{j}Z_0(u_t^\epsilon)dt.
\end{equation}
Let $x_t^\epsilon=\pi(u_t^\epsilon)$ and $\tilde x_t^\epsilon$ its horizontal lift. Let $\mu_u$  be the invariant measure of the following SDE on $G$:
 $$dg_t= \sum_{j=1}^mTL_{g_t} \varpi [ Z_{j}(ug_t)] \circ d w_t^{j}+TL_{g_t} \varpi [ Z_{0}(ug_t)]dt.$$
 Define $${\begin{split}
b(u)&=   \int_{G} \left({1\over 2} \sum_{l=1}^p \breve \nabla_{\X_l} \X_l(ug)+\X_0(ug)\right)d\mu_u(g)\\
a_{i,j}(u)&=\int_{G} \sum_{l=1}^p \<TR_g^{-1}\X_l(ug), H_i(u ) \>\<TR_g^{-1}\X_l(ug), H_j(u ) \>\;d\mu_u(g),
\end{split} }$$
 Then $\tilde x_{t\over\epsilon}^\epsilon$ converges weakly with limiting generator  $\bar \L$. For $F:OM\to \R$ smooth with compact support,
 $$\bar \L F(u) = dF(b(u))
   			+{1\over 2}\sum_{i,j=1}^p a_{i,j}(u)\breve \nabla dF(H_i(u),H_j(u) ).$$

 \end{theorem}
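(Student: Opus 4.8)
The plan is to split $u_t^\epsilon$ into a horizontal lift and a $G$-valued component, which exhibits the system as a fast--slow averaging problem on $OM\times G$, and then to run the martingale-problem argument used for Theorem~\ref{Hopf-theorem}. First I would write $u_t^\epsilon=\tilde x_t^\epsilon g_t^\epsilon$, with $\tilde x_t^\epsilon$ the horizontal lift of $x_t^\epsilon=\pi(u_t^\epsilon)$ through $u_0$ and $g_t^\epsilon\in G$ the unique element with $\tilde x_t^\epsilon g_t^\epsilon=u_t^\epsilon$; the lift is defined for all time because the connection is complete and, by the linear growth of the $\X_l$, neither $u_t^\epsilon$ nor $x_t^\epsilon$ explodes. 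Differentiating $u_t^\epsilon=R_{g_t^\epsilon}(\tilde x_t^\epsilon)$ gives, in Stratonovich form,
$$\circ du_t^\epsilon=TR_{g_t^\epsilon}(\circ d\tilde x_t^\epsilon)+\big(TL_{(g_t^\epsilon)^{-1}}\circ dg_t^\epsilon\big)^{*}(u_t^\epsilon).$$
Applying the connection form $\varpi$ and using $\varpi(\X_l)=0$ and $\varpi(\circ d\tilde x_t^\epsilon)=0$, one reads off
$$\circ dg_t^\epsilon=\sum_{j=1}^pTL_{g_t^\epsilon}\varpi[Z_j(\tilde x_t^\epsilon g_t^\epsilon)]\circ dw_t^j+TL_{g_t^\epsilon}\varpi[Z_0(\tilde x_t^\epsilon g_t^\epsilon)]\,dt,$$
while subtracting the vertical part from $\circ du_t^\epsilon$ leaves
$$\circ d\tilde x_t^\epsilon=\sqrt\epsilon\sum_{l=1}^mTR_{(g_t^\epsilon)^{-1}}\X_l(u_t^\epsilon)\circ db_t^l+\epsilon\,TR_{(g_t^\epsilon)^{-1}}\X_0(u_t^\epsilon)\,dt.$$
So $(\tilde x_t^\epsilon,g_t^\epsilon)$ is a diffusion on $OM\times G$ whose generator is the sum of the generator of the $g_t^\epsilon$-component, of order one, and an order-$\epsilon$ horizontal part, with no cross second-order terms since the families $b$ and $w$ are independent. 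The $\tilde x$-frozen version of the fast component is exactly the $G$-valued SDE in the statement; the spanning hypothesis on $\{\varpi_u[Z_j(u)]\}$ makes this frozen diffusion elliptic on the compact group $G$, so its invariant probability $\mu_u$ exists, is unique, depends smoothly on $u$, and the frozen process mixes exponentially fast.

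Rescaling time, I put $y_t^\epsilon:=\tilde x_{t/\epsilon}^\epsilon$. It\^o's formula relative to the direct-sum connection $\breve\nabla$ --- the Stratonovich corrections coming from the $b$-noise involve only the $\tilde x$-variable, so $g$ may there be treated as frozen --- gives
$$F(y_t^\epsilon)=F(u_0)+\epsilon\,R_t^\epsilon+\epsilon\int_0^{t/\epsilon}\A^\epsilon F(\tilde x_s^\epsilon,g_s^\epsilon)\,ds+\sqrt\epsilon\sum_{l=1}^m\int_0^{t/\epsilon}dF\big(TR_{(g_s^\epsilon)^{-1}}\X_l(u_s^\epsilon)\big)\,db_s^l,$$
where $R_t^\epsilon$ collects boundary terms, bounded on the support of $F$, and
$$\A^\epsilon F(u,g)={1\over 2}\sum_{l}\breve\nabla dF\left(TR_{g^{-1}}\X_l(ug),\,TR_{g^{-1}}\X_l(ug)\right)+dF\left(TR_{g^{-1}}\left({1\over 2}\sum_l\breve\nabla_{\X_l}\X_l+\X_0\right)(ug)\right).$$
Decomposing the horizontal vectors $TR_{g^{-1}}\X_l(ug)$ in the frame $\{H_i(u)\}$ and integrating against $\mu_u$ then identifies $\int_G\A^\epsilon F(u,g)\,d\mu_u(g)$ with $\bar\L F(u)$ as displayed, the vectors entering $b(u)$ being understood as first transported to $T_uOM$ by $TR_{g^{-1}}$.

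It then remains to establish tightness of $\{y_\cdot^\epsilon\}$ and to identify its limit. For tightness I would follow Lemma~\ref{Hopf-lemma-tight}, taking $F=\phi(\cdot,v)$ with $\phi$ a bounded smooth modification of the Riemannian distance of $M$ pulled back through $\pi$ --- admissible because $M$ has positive injectivity radius --- and using the linear-growth hypotheses on $\{\X_l\}$ and $\{|\breve\nabla_{\X_l}\X_l|\}$ to obtain $\E\sup_{s\le\delta}\phi^2(y_s^\epsilon,v)\le C(\epsilon+\epsilon\delta)$ and hence, by the Markov/cocycle property, $\E\sup_{|s-t|\le\delta}\phi^2(y_s^\epsilon,y_t^\epsilon)\le C(\epsilon+\delta)$; the horizontality of $\tilde x_t^\epsilon$ transfers this modulus-of-continuity bound from $M$ to $OM$, so that no injectivity-radius hypothesis on $OM$ is needed. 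Then, on the canonical path space and along a weakly convergent subsequence $\mu_n$ ($\epsilon_n\to0$), I would show as in Lemma~\ref{Hopf-convergence} that $\epsilon\int_{s/\epsilon}^{t/\epsilon}\A^\epsilon F(\tilde x_r^\epsilon,g_r^\epsilon)\,dr\to\int_s^t\bar\L F(y_r)\,dr$ in $L^1$, by splitting $[s/\epsilon,t/\epsilon]$ into blocks of an intermediate length over which $\tilde x^\epsilon$ is essentially frozen while $g^\epsilon$ equilibrates to $\mu_{\tilde x^\epsilon}$, using the exponential mixing of the frozen fast diffusion; since the stochastic-integral term is a martingale, this yields $\E\,\psi(y_r,r\le s)\big(F(y_t)-F(y_s)-\int_s^t\bar\L F(y_r)\,dr\big)\to0$ for every bounded $\F_s$-measurable $\psi$, so every limit point solves the martingale problem for $\bar\L$. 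Because $\bar\L$ is a diffusion operator with smooth coefficients, this martingale problem is well posed, and $\tilde x_{\cdot/\epsilon}^\epsilon$ converges weakly to the $\bar\L$-diffusion. I expect the main obstacle to be exactly this last step: since $OM$ is non-compact and injectivity-radius bounds on $OM$ are to be avoided, all distance estimates must be carried out downstairs on $M$ and lifted by horizontality, and the two-scale averaging must be controlled uniformly in the slowly drifting horizontal variable using only the linear growth of the coefficients.
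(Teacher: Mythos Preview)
Your proposal is correct and follows essentially the same route as the paper: the decomposition $u_t^\epsilon=\tilde x_t^\epsilon g_t^\epsilon$, the derivation of the coupled SDEs for $(\tilde x_t^\epsilon,g_t^\epsilon)$ via $\varpi$, the It\^o expansion of $F(\tilde x_t^\epsilon)$ with respect to $\breve\nabla$, the tightness argument using a smoothed distance on $M$ lifted through $\pi$ to avoid any injectivity-radius assumption on $OM$, and the martingale-problem identification via a block-averaging lemma are all exactly what the paper does (the paper records the averaging step as Lemma~\ref{lemma}). One small remark: in this theorem, unlike Theorems~\ref{Hopf-theorem} and~\ref{geodesic-flow}, no Poisson-equation correction is needed, so your boundary term $R_t^\epsilon$ is in fact zero; the $\A^\epsilon F$ integrand already absorbs the full Stratonovich drift.
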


\begin{proof}
Since $\tilde x_t^\epsilon$ and $u_t^\epsilon$ belong to the same fibre we may define
 $g_t^\epsilon \in G$  by $u_t^\epsilon=\tilde x_t^\epsilon g^\epsilon_t$. If $a_t$ is a $C^1$ curve in $G$ 
 $$ {d\over dt}|_{t}ua_t={d\over dr}_{|_{r=0}} ua_t a_t^{-1}a_{r+t} =(a_t^{-1}\dot a_t)^*(ua_t).$$
It follows that
$$du_t^\epsilon=TR_{g_t^\epsilon} d\tilde x_t^\epsilon+ (TL_{(g_t^\epsilon)^{-1}}dg_t^\epsilon)^*(u_t^\epsilon).$$
Since right translation of horizontal vectors are horizontal, $\varpi(du_t^\epsilon) =TL_{(g_t^\epsilon)^{-1}}dg_t^\epsilon$ and
\begin{equation}
\label{left-BM}
dg_t^\epsilon= \sum_{j=1}^mTL_{g_t^\epsilon} \varpi [ Z_{j}(\tilde x_t^\epsilon g_t^\epsilon)] \circ dw_t^{j}+TL_{g_t^\epsilon} \varpi [ Z_{0}(\tilde x_t^\epsilon g_t^\epsilon)]dt.
\end{equation}
By It\^o's formula, 
 $dx_t^\epsilon= \sqrt\epsilon\sum_{l=1}^p T\pi(\X_l (u_t^\epsilon))\circ d b_t^l+\epsilon T\pi(\X_0 (u_t^\epsilon))dt$ so
$$d \tilde x_t^\epsilon= \h_{\tilde x_t}(\circ d x_t^\epsilon)=
{\sqrt\epsilon} \,\sum_{l=1}^p \h_{\tilde x_t^\epsilon}[T\pi(\X_l(u_t^\epsilon))]\circ d b_t^l+\epsilon\h_{\tilde x_t^\epsilon}[T\pi(\X_0(u_t^\epsilon))]dt.$$
 By assumption on the vector fields $\X_l$,  the above SDE does not explode and $\pi(u_t^\epsilon)$ exists for all time. In terms of the group action, we have
\begin{equation}
d \tilde x_t^\epsilon=
{\sqrt\epsilon} \,\sum_{l=1}^p \h_{\tilde x_t^\epsilon}[T_{\tilde x_t^\epsilon g_t^\epsilon}\pi(\X_l(\tilde x_t^\epsilon g_t^\epsilon)]\circ d b_t^l+\epsilon \h_{\tilde x_t^\epsilon}[T_{\tilde x_t^\epsilon g_t^\epsilon}\pi(\X_0(\tilde x_t^\epsilon g_t^\epsilon)]dt.
\end{equation}
Let $\mu^\epsilon$ be the laws of the $\tilde x_t^\epsilon$.
We first show that $\{\mu^\epsilon\}$ is tight. By Prohorov's theorem  a family of probability measures is tight if it is relatively compact. Since $\tilde x_0^\epsilon=u_0$ it suffices  to estimate the modulus of continuity and show that for all positive numbers $a, \eta$, there exists $\delta>0$ such that for all $\epsilon$ reasonably small,  see Billingsley \cite{Billingsley-68} Ethier-Kurtz\cite{Ethier-Kurtz86},
$$P(\omega: \sup_{|s-t|<\delta} d(\tilde x_t^\epsilon, \tilde x_s^\epsilon)>a)<\eta.$$
 Here $d$ denotes a distance function on $OM$.   The Riemannian distance function is not smooth on the cut locus. The cut locus of $OM$ is in general not predictable by that of $M$. To avoid any assumption on the cut locus
of $OM$ we construct a new distance function that preserves the topology of $OM$.

Let $x\in M$ and $2a$ the minimum of $1$ and the injectivity radius of $M$.  
Let $\phi: \R_+\to \R_+$ be a smooth concave function such that $\phi(r)=r$ when $r<a$ and $\phi(r)=1$ when $r\ge 2a$, e.g. $\phi$ is the convolution of $\min(1, r)$ with a standard mollifier supported in the set $\{r: |r-{3a\over 2}|<a/2\}$. 
 Let $\rho$ and $\tilde \rho$ be respectively  the Riemannian distance on $M$ and $OM$.  Then $\phi \circ \rho$ and $d:=\phi\circ  \tilde\rho$ are distance functions.  
For $u\in \pi^{-1}(x)$,
$${ \begin{split}
\phi \circ \tilde \rho(u, \tilde x_t^\epsilon)
=&( \phi\circ \tilde \rho)(u, \tilde x_0^\epsilon) +\int_0^t  d(\phi\circ \tilde \rho) \left({\sqrt\epsilon} \,\sum_{l=1}^p \h_{\tilde x_s^\epsilon}[T\pi(\X_l(u_s^\epsilon))]\circ dB_s^l\right)\\
&+\int_0^t \epsilon \;d(\phi\circ \tilde \rho)\;\h_{\tilde x_s^\epsilon}[T\pi(\X_0(u_s^\epsilon))]\;ds
\end{split}}$$
$${ \begin{split}=& ( \phi\circ \tilde \rho )(u, \tilde x_0^\epsilon)+ \int_0^t d(\phi\circ \rho) \left({\sqrt\epsilon} \sum_{l=1}^p [T\pi(\X_l(u_s^\epsilon))]dB_s^l\right)\\
&+ \epsilon \sum_{l=1}^p\int_0^t  \nabla d(\phi\circ  \rho) \left(  T\pi(\X_l(u_s^\epsilon)),   T\pi(\X_l(u_s^\epsilon))\right)\; ds\\
&+ \epsilon  \int_0^t   d(\phi\circ  \rho) \left({1\over 2} \sum_{l=1}^p \nabla_{ T\pi(\X_l)} (T\pi \circ \X_l)(u_s^\epsilon)+T\pi(\X_0(u_s^\epsilon)) \right)ds.
\end{split}}$$
Since $\phi\circ \rho$ has compact support and the vector fields concerned have linear growth, 
$|T\pi(\X_l(u_s^\epsilon))|\le C(1+\rho(u_s^\epsilon, u))\le [C+C\tilde\rho(\tilde x_s^\epsilon, \tilde u_s^\epsilon)]+C\tilde\rho(u, \tilde x_s^\epsilon)$ some $u\in OM$. The quantity $C+C\tilde\rho(\tilde x_s^\epsilon, \tilde u_s^\epsilon)$ is bounded from the compactness of $G$ and it follows that  $\E[ \phi\circ \tilde \rho(u,\tilde x_t^\epsilon)]^2
)\le C_1(t) ( ( \phi\circ \tilde \rho)^2(u, \tilde x_0^\epsilon) +\epsilon t)$
for some constant $C$.  By the Markov property and the estimates below the required tightness follows,
$$\E[ \phi\circ \tilde \rho(\tilde x_{s\over \epsilon}^\epsilon,\tilde x_{t\over \epsilon}^\epsilon)]^2)
\le C_1 |t-s|.$$

%

By the right invariance of the horizontal lift,
$$\h_{\tilde x_s^\epsilon}[T_{\tilde x_s^\epsilon g_s^\epsilon}\pi(\X_l(\tilde x_s^\epsilon g_s^\epsilon)]
=TR_{(g_s^\epsilon)^{-1}}\X_l(u_s^\epsilon).$$
 Let $F:OM\to \R$ be a smooth function.  For $\breve \nabla$, the canonical direct sum connection on $OM$ associated to $\nabla$,
\begin{eqnarray*}
F(\tilde x_t^\epsilon)&=&F(u_0)+{\sqrt\epsilon} \,\sum_{l=1}^p\int_0^t dF\left( TR_{(g_s^\epsilon)^{-1}}\X_l(u_s^\epsilon) \right)dB_s^l\\
&&+{1\over 2}\epsilon \sum_{l=1}^p
\int_0^t \breve\nabla dF\left(TR_{(g_s^\epsilon)^{-1}}\X_l(u_s^\epsilon), TR_{(g_s^\epsilon)^{-1}}\X_l(u_s^\epsilon) \right)ds\\
&&+{1\over 2}\epsilon \sum_{l=1}^p  \int_0^t  dF\left(\breve \nabla_{\X_l} \X_l(u_s^\epsilon)+\X_0(u_s^\epsilon) \right) ds.
\end{eqnarray*}
By tightness  and Prohorov's theorem we may take a sequence $\epsilon_n\to 0$ with the property that $\tilde x_{t\over \epsilon}^{\epsilon_n}$ converges in law to a probability measure $\mu$.  Let $X_\cdot$ be the coordinate process on the path space and $\G_t=\sigma\{X_s: 0\le s\le t\}$. Since $G$ is compact the following term has at most quadratic growth,
$$\int_{G} \<TR_{g^{-1}}\X_l(u  g), H_i(u)\>\< TR_{g^{-1}}\X_l(u  g), H_j(u)\>\mu_u(dg),$$
 and by the same argument $\int_{G} \sum_{l=1}^p \breve \nabla \X_l(\X_l)(u g) \mu_u(dg) $ has linear growth. 
To identify the limiting process it suffices to show that for all real-valued smooth functions $F$ on $OM$ with compact support, $$\int \left(F(X_t)-F(X_s)-\int_s^t \bar \L F(X_r)dr\right) g\;d\mu^\epsilon$$ converges to zero
where $g$ is any real-valued bounded $\G_s$-measurable function on the Wiener space and $X_t$ the canonical process.
 
Let $z_t^n$ be a sequence of random variables  whose law agrees with that of $\tilde x_{t\over \epsilon_n}^{\epsilon_n}$ for some sequence $\epsilon_n$ and $z_t^n$ converges almost surely. Let $g$ be a $\{z_s^n, s\le t \}$-adapted bounded function. For $t\ge s$, 
$$\E g\left(F(z_{t}^n)-F(z_{s}^n)
-\int_s^t \bar \L F(z_{r}^n)dr\right)=\E \left[g \int_s^t  (\A^{\epsilon_n} F-\bar \L F) (z_{r}^n) dr\right] \to 0,$$ where $\A^{\epsilon_n}F$ is given by the bounded variation part in the formula for $F(\tilde x_t^\epsilon)$. The convergence holds since $G$ is compact and also the invariant measure $\mu_{G}$ for the 
elliptic SDE (\ref{left-BM}) is ergodic.  The proof is standard and follows from the Lemma below. See e.g.   Hasminskii \cite{Hasminskii68},  Papanicolaou-Stroock-Varadhan \cite{Papanicolaou-Stroock-Varadhan77}. \end{proof}

\begin{lemma}\label{lemma}
 Let $f$ be a bounded function with bounded derivative then
 $$\int_s^t   \A^\epsilon f (\tilde x_{r\over \epsilon}^\epsilon) dr
 =\int_s^t   \bar\L f (\tilde x_{r\over \epsilon}^\epsilon) dr +R(f, \epsilon,s, t)$$  where
 $\left(\E \sup_{s\le t}  |R(f, \epsilon, s,t)|^\beta \right)^{1\over \beta} \le C(t)\epsilon^{1\over 3}$ for any $\beta>1$.
 \end{lemma}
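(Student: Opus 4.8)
The plan is to exploit the ergodicity of the vertical "fast" process $g_t^\epsilon$ on the compact group $G$ to replace the fast oscillations in $\A^\epsilon f$ by their average against the invariant measure $\mu_u$, with quantitative control on the error. Write $F_l^f(u,g) = \breve\nabla df(TR_{g^{-1}}\X_l(ug), TR_{g^{-1}}\X_l(ug))$ and $G^f(u) = df(\breve\nabla_{\X_l}\X_l(u)+\X_0(u))$, so that $\A^\epsilon f(\tilde x_{r/\epsilon}^\epsilon)$ is built from such terms evaluated along $(\tilde x_{r/\epsilon}^\epsilon, g_{r/\epsilon}^\epsilon)$; the key quantity is $\epsilon\int_{s/\epsilon}^{t/\epsilon} \big(F_l^f(y_r,g_r) - \bar F_l^f(y_r)\big)\,dr$ where $\bar F_l^f(u)=\int_G F_l^f(u,g)\,d\mu_u(g)$ is the fibrewise average. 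Here I abbreviate $y_r=\tilde x_r^\epsilon$, $g_r=g_r^\epsilon$.

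First I would slice $[s/\epsilon, t/\epsilon]$ into blocks of length $L_\epsilon=\epsilon^{-2/3}$ (so there are $\sim(t-s)\epsilon^{-1/3}$ of them). On each block $[t_i/\epsilon, t_{i+1}/\epsilon]$ I freeze the slow variable: by the SDE for $\tilde x_t^\epsilon$, which carries a factor $\sqrt\epsilon$ on the martingale part and $\epsilon$ on the drift, one has $\E\sup_{r\in[t_i/\epsilon,t_{i+1}/\epsilon]} \tilde\rho(y_r,y_{t_i/\epsilon})^2 \lesssim \epsilon L_\epsilon = \epsilon^{1/3}$, using the tightness-type estimate from the proof of Theorem \ref{level-thm} (with the truncated distance function to avoid the cut locus of $OM$). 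Since $f$ has bounded first and second derivatives and $G$ is compact, $F_l^f$ is Lipschitz in the slow variable uniformly in $g$, so replacing $y_r$ by $y_{t_i/\epsilon}$ inside each block costs $O(\epsilon^{1/6})$ in $L^2$ after summing the blocks.

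Second, with the slow variable frozen, I need the ergodic average: for fixed $u$, letting $\tilde g_r$ solve the $G$-valued SDE \eqref{left-BM} with coefficients evaluated at $u g$, I claim $\E\big|\frac{\epsilon}{ \Delta t_i}\int_{t_i/\epsilon}^{t_{i+1}/\epsilon} (F_l^f(u,g_r)-\bar F_l^f(u))\,dr\big|^2 \lesssim \frac{\epsilon}{\Delta t_i} = \epsilon^{1/3}$. This is the standard variance bound for additive functionals of an exponentially ergodic diffusion on a compact manifold: since $\{\varpi_u[Z_j(u)]\}$ spans $\g$ the generator of $\tilde g_\cdot$ is elliptic on the compact $G$, hence has a spectral gap, the Poisson equation $-\L_G h = F_l^f(u,\cdot)-\bar F_l^f(u)$ is solvable with solution smooth in both variables, and Itô's formula plus the gap gives decay of correlations, whence the $O(\Delta t_i^{-1}\cdot\epsilon)$ variance. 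One subtlety is that the actual process $g_r^\epsilon$ has its coefficients evaluated along the moving $\tilde x_r^\epsilon$ rather than the frozen $u$; this discrepancy is again controlled by the $\epsilon^{1/3}$ slow-oscillation bound together with smooth dependence of the Poisson solution on $u$. Summing over the $\sim\epsilon^{-1/3}$ blocks and using near-independence of successive blocks (or just the triangle inequality in $L^2$, which already gives the claimed order), the total error in $L^2$ is $O(\epsilon^{1/3})$; the martingale part of $\A^\epsilon f - \bar\L f$ carries an explicit $\sqrt\epsilon$ and contributes a term of order $\sqrt\epsilon$ to $R$, dominated by $\epsilon^{1/3}$. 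Finally, to upgrade from $L^2$ in the endpoint to $\E\sup_{s\le t}|R|^\beta$ for arbitrary $\beta>1$: the supremum over $s$ of a sum-of-blocks expression is controlled by a maximal inequality (Doob for the martingale piece, a chaining/Garsia–Rodemich–Rumsey argument for the bounded-variation piece), and boundedness of $f$ and its derivatives together with compactness of $OM$'s fibre and $G$ gives uniform bounds allowing interpolation between the $L^2$ rate and trivial $L^\infty$ bounds to reach any $L^\beta$; the exponent $\frac13$ is exactly what the block length $\epsilon^{-2/3}$ yields after balancing the freezing error $\sqrt{\epsilon L_\epsilon}$ against the ergodic error $\sqrt{\epsilon/L_\epsilon}$.

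The main obstacle is the second step — establishing the quantitative ergodic (variance) estimate uniformly in the frozen slow parameter $u$ while the noise is shared between the slow and fast equations (both driven partly by the same Brownian motions in the general setup, and in any case correlated through the coefficients). Making the "freeze, average, unfreeze" scheme rigorous requires the Poisson equation for $\L_G$ to have solutions depending smoothly and with uniformly bounded derivatives on $u$, which in turn needs the spectral gap of the elliptic generator on $G$ to be locally uniform in $u$; this is where completeness of the connection, the spanning hypothesis on $\{\varpi_u[Z_j(u)]\}$, and compactness of $G$ are all used. Once that is in hand, the rest is the routine block decomposition and maximal-inequality bookkeeping sketched above.
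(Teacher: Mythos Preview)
Your approach is essentially the same as the paper's: subdivide the time interval into blocks, freeze the slow variable $\tilde x^\epsilon$ on each block, apply a quantitative ergodic theorem for the fast $G$-valued process on each block, and sum the errors. The paper gives only a sketch, referring to Lemma~3.2 of \cite{Li-averaging} for the details, and your outline (cocycle property, regularity of $\A^\epsilon f$, rate in the ergodic theorem) matches that sketch closely.

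One arithmetic slip: with block length $L_\epsilon=\epsilon^{-2/3}$ in fast time, the freezing error is $\sqrt{\epsilon L_\epsilon}=\epsilon^{1/6}$, not $\epsilon^{1/3}$, and your ``balancing'' equation $\sqrt{\epsilon L_\epsilon}=\sqrt{\epsilon/L_\epsilon}$ gives $L_\epsilon=1$, not $\epsilon^{-2/3}$; so the claim that this specific choice yields the exponent $\tfrac13$ does not check out. The mechanism is right---balance the freezing error $\sqrt{\epsilon L}$ against the ergodic-theorem error $1/\sqrt{L}$ from the triangle inequality over blocks---but you should redo the exponents (and consult the referenced lemma to see exactly which estimate produces $\epsilon^{1/3}$). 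This does not affect the correctness of the strategy, only the bookkeeping.
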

The proof is completely analogous of that  of Lemma 3.2 in \cite{Li-averaging}.
 In sub-intervals whose length is very small compared to $1/\epsilon$ we consider $\tilde x_s^\epsilon$ as constants, and apply the ergodic theorem on each interval. With the size of the sub-intervals chosen correctly, the sum over all sub-intervals of the limits forms a Riemann sum. The convergence follows from the Cocycle property of the flows, estimates for the rate of convergence in the ergodic theorem and the regularity of the function $\A^\epsilon f$.

\medskip

  In the theorem above the assumption on the injectivity radius can be removed in the case of the projection being a Brownian motion with bounded drift. See e.g. the estimates in \cite{Li-flow}.  We look into two special cases, when the horizontal vector fields are either  right invariant (lifts of vector fields on the manifold $M$) or the standard horizontal vector fields.

 \begin{example} [The Right Invariant Case]  
 \label{example-right} Let $X_l, l=0,1,2,\dots m$, be vector fields on $M$.
 Define  $\X_l(u)=\h_u(X_l(\pi(u)))$  and we have
$$ du_t^\epsilon=\sqrt \epsilon \sum_{l=1}^p \X_l(u_t^\epsilon)\circ d b_t^l+\epsilon \X_0(u_t^\epsilon)dt+ \sum_{j=1}^m  Z_j(u_t^\epsilon)\circ d w_t^{j}+ Z_0(u_t^\epsilon)dt.$$
 The projection $\pi(u_t^\epsilon) $ satisfies
$d x_t^\epsilon
=\sqrt \epsilon \sum_{l=1}^p X_l(x_t^\epsilon)\circ d b_t^l+\epsilon X_0(x_t^\epsilon)dt$.
 For all $\epsilon$, $x_{t\over \epsilon}^\epsilon$ are ${1\over 2}\sum L_{X_i}L_{X_i}+L_{X_0}$-diffusions.   The horizontal lift $\tilde x_{t\over \epsilon}$ of $x_{t\over \epsilon}$ are ${1\over 2}\sum L_{\X_i}L_{\X_i}+L_{\X_0}$-diffusions.  \end{example}

\begin{example} [The Rotational Invariant Case]
\label{example-2.2}
 Let $\{e_l\}_{l=1}^n$ be an o.n.b. of $\R^n$, $e_0\in \R^n$. Let $H_l(u)\equiv H(u)(e_l)$, and $H_0(u)\equiv H(u)(e_0)$ be horizontal vector fields. We have
 $$ du_t^\epsilon=\sqrt \epsilon \sum_{l=1}^n H_l(u_t^\epsilon)\circ d b_t^l+\epsilon H_0(u_t^\epsilon)dt+\sum_{j=1}^m  Z_j(u_t^\epsilon)\circ d w_t^{j}+ Z_0(u_t^\epsilon)dt.$$
Write $\tilde x_t^\epsilon=u_t^\epsilon(g_t^\epsilon)^{-1}$. Then
\begin{equation}\label{horizontal}
d\tilde x_t^\epsilon=\sqrt \epsilon H(\tilde x_t^\epsilon)( g_t^\epsilon \circ db_t)+\epsilon H(\tilde x_t^\epsilon)(g_t^\epsilon e_0)dt.
\end{equation}
Its `formal'   Stratonovitch correction term vanishes. 
If $\tilde x_0^\epsilon=u_0$ then $g_0^\epsilon$ is the identity matrix. Write $dw_t^0=dt$ and let $Z_j=\sigma_k^jA_j^*$ where $\{A_j\}$ is an o.n.b of $\so(n)$. Then
$$dg_t^\epsilon=\sum_{j,k} \sigma_k^j(u_t^\epsilon)g_t^\epsilon A_j\circ dw_t^k.$$

If  $\sigma_k^j$ are constants the process $g_t^\epsilon$ is independent of $\epsilon$
and $\tilde x_t^\epsilon$ is a Markov process on $OM$. If furthermore $e_0=0$, the law of $\tilde x_{t\over \epsilon}^\epsilon$, and hence that of  $x_{t\over \epsilon}^\epsilon$,  is independent of $\epsilon$. 
 This follows from the independence of $g_t^\epsilon$ and $\{b_t\}$.
Finally $\tilde x_t^\epsilon$ is a horizontal Brownian motion with projection $x_t$ a Markov process and a Brownian motion on $M$.  This is the construction of Brownian motions of Eells-Elworthy  \cite{Eells-Elworthy}. The invariance is no longer true for $e_0\not =0$. 
 
Remark: More generally if $\{\Phi_t(u)\}$ is a family of Markov processes on $OM$ with the property that  
 $\Phi_t(ug)\stackrel{law}{=} \Phi_t(u) \psi_t(g)$ for some $\psi_t(g)\in G$ and
 $\sigma\{\pi(\Phi_r(u))| r\le s\}=\sigma\{\Phi_r(u): r\le s\}$, then $\pi(\Phi_t(u))$ is a Markov process. Denote by $Q_t(u_0, du)$ the law of $\Phi_t(u_0)$ and let $f: M\to \R$ be a Borel measurable function, $x_t=\pi(\Phi_t(u_0))$, \begin{eqnarray*}
\E\{f(x_t)|\sigma\{x_r, r\le s\} \}
=\int (f\circ \pi)(u) Q_{t-s}(\tilde x_s, du).
\end{eqnarray*}
It follows that  $\int (f\circ \pi)(u) Q_{t-s}(\tilde x_s, du)=\int (f\circ \pi)(u\psi_s(g)) Q_{t-s}(\tilde x_sg, du)=\int (f\circ \pi)(u) Q_{t-s}(\tilde x_s g, du)$. So  $\E\{f(x_t)|\sigma\{x_r, r\le s\} \}$ depends only on $x_s=\pi(\tilde x_s)$. When $e_0=0$, the flow of (\ref{horizontal}) satisfies the rotational invariance condition and
 the horizontal lift of $x_t$ is a function of the path  $(x_r, r\le t ) $.

\end{example}

\begin{example}
Let $\alpha: M\times \R^n \to  \R^n$ be a smooth  map so that
$ \alpha(x)\in {\mathbb L}(\R^n; \R^n)$. Let$\{e_i\}_{i=1}^n$ be an o.n.b. of $\R^n$, $e_0\in \R^n$. Consider
$$du_t^\epsilon=\sqrt \epsilon \sum_{l=1}^n \h_u [\alpha(\pi(u))e_l]\circ d b_t^l+\epsilon \h_u [\alpha(\pi(u))e_0](u_t^\epsilon)dt+ \sum_{j=1}^m  Z_j(u_t^\epsilon)\circ d w_t^{j}+ Z_0(u_t^\epsilon)dt.$$
The projection $x_t^\epsilon=\pi(u_t^\epsilon)$ satisfies:
$$dx_t^\epsilon
=\sqrt \epsilon \sum_{l=1}^n  u_t^\epsilon \alpha(x_t^\epsilon)(e_l)\circ d b_t^l+\epsilon u_t^\epsilon \alpha(x_t^\epsilon)(e_0)dt=\sqrt \epsilon u_t^\epsilon \alpha(x_t^\epsilon)\circ d b_t+\epsilon u_t^\epsilon \alpha(x_t^\epsilon)(e_0)dt.$$
Let $\tilde x_t^\epsilon$ be the horizontal lifting map of $x_t^\epsilon$ and $g_t^\epsilon$ be an element of $G$ determined by $u_t^\epsilon=x_t^\epsilon g_t^\epsilon$. Then
$d\tilde x_t^\epsilon=\sqrt \epsilon  H(\tilde x_t^\epsilon) g_t^\epsilon \alpha(x_t^\epsilon)\circ d b_t+ \epsilon H(\tilde x_t^\epsilon)g_t^\epsilon \alpha(x_t^\epsilon)(e_0)dt$.
When $\alpha(x)$ is not trivial  the bounded variation term
 for $f(x_t)$, where $f:M\to \R$ is a smooth function, will involve $\sum_i\nabla df(u_t^\epsilon \alpha(x_t^\epsilon)e_i, u_t^\epsilon \alpha(x_t^\epsilon)e_i)$ which
 is no longer a trace. It will also involve the derivative of $\X_l$.
In this case it is useful to consider the system as  perturbation of the vertical SDE about which we know a lot more.  
\end{example}

\subsection{Perturbation of Ornstein-Uhlenbeck Type}
\label{OU-section}

We now describe the relation between  horizontal equations on frame bundles and geodesic flows. Let $P=GL(M)$ be the linear frame bundle over $M$.
A vector field on a frame bundle can be considered as a second order differential equation on the underlying manifold as below.   Fix $e_0\in \R^n$ and $H$ the isotropy group at $e_0$ of the action $G=GL(n,\R)$  on $\R^n$.  The tangent bundle $TM$ can be considered as a fibre bundle associated with the principal fibre bundle $P$ with fibre $\R^n$.  
The total space $E$ is $P\times \R^n/\sim$ where the equivalent class is determined by $[u, e]\sim [ug^{-1}, ge]$, any $g\in G$.   Elements of the form $ug$ where $g\in H$ belong to the same equivalence class.  It can be identified with the quotient bundle $P/H$, whose element containing $u$  is the equivalence class of the form $\{ug, g\in H\}$.  Denote by $\xi_0$ the coset  $H$. Let $\alpha$ be the associated map:
$$\alpha_{e_0}: u\in P\to  ue_0\in TM.$$
This induces a map $w\in T_uP\to T_{u}\alpha_{e_0} (w) \in T_{ue_0}TM$. Each element  $v\in TM$ has a representation $v=ue_0$, where $u$ is unique up to right translation by elements of $H$. Furthermore a right invariant vector field $W$ on $P$  induces a vector field on $TM$. In fact if $v=ue_0'=ue_0$ there is $g\in G$ with $u'=ug$ and $e_0'=g^{-1}e_0$. Since $\alpha_{e_0}(u)=\alpha_{e_0'}(R_gu)$,
$$T_u\alpha_{e_0}(W(u))=T_{u'}\alpha_{e_0'}TR_g(W(u))=T_{u'}\alpha_{e_0'}W(u').$$
This map $W\in \Gamma TP \mapsto X_W\in \Gamma TTM$ is independent of the choices of $e_0$. Fix $e_0$. Any vector field $W$ that is invariant by right translations of elements of $H$ induces a vector field on $TM$.
Consider a horizontal distribution determined by a connection on $TM$ and let $W(u)=H_u(e_0)$ be the fundamental horizontal vector field associated to $e_0$, 
the induced vector field is a geodesic spray $X$, i.e. in local co-ordinates $X(x,v)=(x,v, v, Z(x,v))$ and $Z(x,sv)=s^2Z(s,v)$, which corresponds to the geodesic flow equation on $TM$:
$$dv_t=-\Gamma_{\sigma_t}(v_t) (v_t),\; \dot \sigma_t=v_t, \sigma(0)=\pi(u), v(0)=ue_0.$$
Here $\Gamma$ denotes the Christoffel symbol. The corresponding horizontal flow on $P$ is given by $\dot u_t=H(u_t)(e_0)$.

\bigskip

Based on E. Nelson's Ornstein-Uhlenbeck theory of Brownian motions \cite{Nelson} we ask the following question.  What happens if we replace the driving Brownian motion $dw_t$ by  $v_t dt$ where $v_t$ is an Ornstein-Uhlenbeck process? Consider the position process $z_t$ in  $\R^n$ with velocity process satisfy the Langevin equation:
$${\begin{split}
dv_t^\epsilon&=-{1\over \epsilon}v_t^\epsilon dt+{1\over \epsilon}dw_t\\
\dot z_t^\epsilon&=v_t^\epsilon.
\end{split}}$$
where $w_t$ is a Brownian motion with values in $\R^n$ and $z_0=0$. The $z_t^\epsilon$ process converges to $w_t$ as $\epsilon \to 0$. The convergence holds almost surely and in fact the result holds if  $w_t$ is replaced by any continuous function.
We now interpret  the convergence  in terms of  homogenisation. First we rescale  the variables in space and time and setting $\tilde v_t=\sqrt \epsilon v_t, \tilde z_t=\sqrt \epsilon z_t$.  It is easy to see that $\tilde z_t^\epsilon$ is the slow variable and
$\sqrt\epsilon z_{t\over \epsilon}$ converges to a Brownian motion. In fact
$${\begin{split}
d\tilde v_t^\epsilon=-{1\over \epsilon}\tilde v_t^\epsilon dt+{1\over \sqrt \epsilon}dw_t, \qquad
\dot {\tilde z}_t^\epsilon=\tilde v_t^\epsilon.
\end{split}}$$

We must take care to take  this model to the orthonormal bundle.  First  we are not allowed to rescale  variables in non-linear spaces.  We should not  rescale the frame variable, in the orthonormal frame bundle, in space either. 

We shall consider the orthonormal frame bundle of an $n$-dimensional manifold with group action by $G=O(n)$ or $G=SO(n)$ if the manifold is oriented so that $OM$ is a connected manifold of its own right. In the latter case we assume that $n>1$. 
 Let $e_0 \in \R^n$ be a unit vector and $\{A_k, k=1,2,\dots, N=n(n-1)/2\}$ be elements of $\g$, and $A_0\in \g$.
Let $A_k^*$ be the corresponding fundamental vertical vector field corresponding to $A_k$. Consider  \begin{equation*}
du_t^\epsilon= H(u_t^\epsilon)(e_0)dt
+{1\over \sqrt \epsilon} \sum_{k=1}^NA_k^*(u_t^\epsilon)\circ dw_t^k+ {1\over \epsilon} A_0^*(u_t^\epsilon) dt.
\end{equation*}
For `$\epsilon=\infty$', the equation can be considered as the `geodesic flow' equation, as explained earlier.
 If $x_t^\epsilon=\pi(u_t^\epsilon)$ then $\dot x_t^\epsilon =u_t^\epsilon e_0$. Note that the change of the velocity of the motion on $M$ is always unitary. Due to the fast rotation, the geodesic has rapid changing directions and we expect to see a jittering motion and indeed we obtain a scaled Brownian motion in the limit if the rotational motion is elliptic. 


A related theorem is given in Dowell \cite{Dowell} stating that an Ornstein-Uhlenbeck position process  on 2-uniformly smooth Banach manifolds converges. Those are manifolds modelled on 2-uniformly smooth Banach spaces. By a 2-uniformly smooth Banach space $B$ we mean one with the property that there is a constant $C>0$ such that $||x+y||^2+||x-y||^2\le 2 ||x||^2+C||y||^2$ holds
for all $x, y\in B$. The iterated Ornstein-Uhlenbeck processes in \cite{Dowell} and
the settings of  manifolds with Lorenzo metrics, see e.g. Bailleul \cite{Bailleul}, are also worth exploring.     We expect interesting results  arise  for processes with infinite-dimensional noise.  For a related work, central limit theorem for geodesic flows, we refer to Enriquez-Franchi-LeJan \cite{Enriquez-Franchi-LeJan}.

\medskip

  Let $\A_k^*$ denote also the left invariant vector field on $G$ induced by $A_k\in g$:  $\A_k^*(g)=gA_k$. Let $\A={1\over 2}\sum_k (A_k^*)^2+A_0^*$. Denote by $\Delta^L$ the left invariant Laplacian on $G$.
For all $\epsilon$, let $u_0^\epsilon=u_0$ and $x_0=\pi(u_0)$. Let $\{e_i\}$ be an orthonormal basis of $\R^n$ and we may take $e_1=e_0$.
\begin{theorem}
\label{geodesic-flow}
Let $M$ be a compact Riemannian manifold, $u_0\in OM$. Let $u_t^\epsilon$ be the solution to the SDE on $OM$:
\begin{equation}\label{ou-1}du_t^\epsilon= H(u_t^\epsilon)(e_0)dt
+{1\over \sqrt \epsilon} \sum_{k=1}^NA_k^*(u_t^\epsilon)\circ dw_t^k+ {1\over \epsilon} A_0^*(u_t^\epsilon) dt, \; u_0^\epsilon=u_0.
\end{equation}
If $\A$ is elliptic then $\pi(u_{t\over \epsilon}^\epsilon)$  and its horizontal lift converges in law. If furthermore  $\A={1\over 2}\Delta^L$ then $\pi(u_{t\over \epsilon}^\epsilon)$ converges in law to a rescaled Brownian motion with generator
${4\over n(n-1)}\Delta$. Its horizontal lift converges in law to the diffusion process on $OM$ with generator
${4\over n(n-1)}\Delta_H $.
\end{theorem}
\begin{proof}
Define the Lie group valued process $g_t^\epsilon$ by $u_t^\epsilon=\tilde x_t^\epsilon g_t^\epsilon$ and $g_0^\epsilon=I$, the unit matrix. Following earlier computations and using the fact that $A^*$ is right invariant and $(R_g)^*\varpi=\ad(g^{-1})\varpi$,
$$dx_t^\epsilon= \tilde x_t^\epsilon g_t^\epsilon e_0dt, \quad d\tilde x_t= H(\tilde x_t^\epsilon) (g_t^\epsilon e_0)dt, \quad dg_t^\epsilon ={1\over \sqrt \epsilon} \sum_{k=1}^Ng_t^\epsilon A_k\circ dw_t^k +{1\over \epsilon} g_t^\epsilon A_0 dt.$$
For any smooth function $F: OM\to \R$ with compact support, 
$$F(\tilde x_t^\epsilon)=F(u_0)+\int_0^t dF\left(\tilde x_s^\epsilon) (H(\tilde x_s^\epsilon\right) g_s^\epsilon e_0)\,ds.
$$
As in the proof of the previous theorem, we see that the family $\{\tilde x_{t\over \epsilon}^\epsilon\}$ is tight and that it converges in law as $\epsilon\to 0$.   For each $u\in OM$ there is a solution $h_i:G\to \R$ to the equation 
 $$\A h_i(u,g)=dF(u)\left(H(u)e_i\right)\<ge_0, e_i\>.$$
For  $n>1$, $\int gOdg=\int gdg$ where $dg$ is the Haar measure normalised to be a probability measure and $O$ any matrix in $G$. The integral of $ge_0$ with respect to the Haar measure on $G$ vanishes. For $G=SO(n)$ it follows also
  from   $\int_{SO(n)} ge_0dg=\int_{S^{n-1}}s ds$, see Proposition 3.2.1 in Krantz-Parks \cite{Krantz-Parks}.
Denoting by $D_1h_i$ and $D_2h_i$ the differential of $h_i$ with respect to the first and the second variable respectively,
$${\begin{split}h_i(\tilde x_t^\epsilon, g_t^\epsilon)=&h(u_0,I)+{1\over \sqrt\epsilon} \sum_k \int_0^t (D_2h_i)_{(\tilde x_s^\epsilon, g_s^\epsilon)}(g_s^\epsilon A_k) dw_s^k
+{1\over \epsilon} \int_0^t \A h_i(\tilde x_s^\epsilon, g_s^\epsilon)ds \\
&+ \int_0^t (D_1h_i)_{(\tilde x_s^\epsilon, g_s^\epsilon)}\left(H(u_s^\epsilon)  g_s^\epsilon e_0\right)ds.
\end{split}}$$
Plug this back to $F(\tilde x_t^\epsilon)$ to see that
$${\begin{split}
F(\tilde x_{t\over \epsilon}^\epsilon)=&F(u_0)+ \epsilon \sum_i \left(h_i(\tilde x_{t\over \epsilon}^\epsilon, g_{t\over \epsilon}^\epsilon)-h(u_0,I)\right)
-\sqrt \epsilon \sum_{k,i} \int_0^{t\over \epsilon} (D_2h_i)_{(\tilde x_s^\epsilon, g_s^\epsilon)}(A_kg_s^\epsilon) dw_s^k\\
&-\epsilon\sum_i \int_0^{t\over \epsilon} (D_1h_i)_{(\tilde x_s^\epsilon, g_s^\epsilon)}(H(u_s^\epsilon)  g_s^\epsilon e_0)ds.
\end{split}}$$
The tightness of the law of $\{\tilde x_{t\over \epsilon}^\epsilon\}$ can be proved similar to that of the previous theorems, c.f. Lemma \ref{Hopf-lemma-tight}. Furthermore for $s<t$ the following convergence holds in $L^1$,  
$${\begin{split}
&-\epsilon \int_{s\over \epsilon}^{t\over \epsilon} (D_1h_i)_{(\tilde x_s^\epsilon, g_s^\epsilon)}(H(u_s^\epsilon)  g_s^\epsilon e_0)ds\\
&\to -\int_s^t \nabla dF  \left(H(\tilde x_s^\epsilon)e_j, H(\tilde x_s^\epsilon)e_i\right) ds\int_G  \<ge_0, e_j\> \A^{-1}\<ge_0, e_i\>d\mu(g),
\end{split}}$$
where $\mu$ is the unique invariant measure for the $\A$ diffusion on $G$. The proof is similar to that of Lemma \ref{Hopf-convergence}, taking into account of the following computation $${\begin{split} 
-(D_1h_i)_{(u,g)}\left(H(u)(ge_0)\right)&=-\nabla dF  \left(H(u)ge_0, H(u)e_i\right)\A^{-1}\<ge_0, e_i\>\\
&=-\sum_j\nabla dF  \left(H(u)e_j, H(u)e_i\right) \<ge_0, e_j\> \A^{-1}\<ge_0, e_i\>. \end{split}} $$

Now we assume that $\A={1\over 2}\Delta^L$. We may assume that $\{A_k\}$ is an orthonormal basis of $\g$ and $A_0=0$ and let
 $$a_{i,j}=-\int_G\<ge_0, e_j\> ({1\over 2}\Delta^L)^{-1}\<ge_0, e_i\>dg,$$
 where $g$ is the Haar measure on $G$.
For $i\not =j $ the cross term $a_{i,j}$ vanishes.  There is an element $O\in G$ such that
$Oe_i=-e_i$ and $Oe_j=e_j$.  Furthermore $\sum_l A_l^2=-{n-1\over 2}I$ and  $\sum_l gA_l^2=-{n-1\over 2} gI$.
It is easy to see that $({1\over 2}\Delta^L)^{-1}\<ge_0, e_i\>=-{ 4 \over n-1}\<ge_0, e_i\>$.  The integral $\int \<ge_0, e_i\>^2 dg$ is independent of $i$. In fact
$\int \<ge_0, e_i\>^2 dg=\int \<ge_0, Oe_i\>^2 dg $, for any $O\in G$. Since  $\int\sum_i \<ge_0, e_i\>^2 dg=1$ it follows that
$$ \qquad a_{i,i}={ 4\over n-1 } \int_G\<ge_0, e_i\>^2 \; dg ={4\over (n-1)n}.$$
Finally we see that
$${\begin{split} &\sum_{i,j}\nabla dF  \left(H(u)e_j, H(u)e_i\right) \<ge_0, e_j\> \A^{-1}\<ge_0, e_i\>\\
&={4\over (n-1)n}\sum_{i=1}^n   \nabla dF(\h_u(ue_i), \h_u(ue_i))={4\over (n-1)n}\Delta_H.
\end{split}}$$
The two operators $\Delta_H$ and $\Delta$ are intertwined by $\pi$, and $x^\epsilon_{t\over \epsilon}$ converges to a Brownian motion.
\end{proof}

\subsection{Another Intertwined Pair}

At this point we discuss  a question asked to  me by  J. Norris. 
Since the process on the orthonormal frame bundle encodes the Riemannian metric we  expect to see the Riemannian metric manifesting itself in some form, e.g. in the form of the corresponding Laplacian operator.  Does the system below have a non-degenerate limit which is not necessarily associated to the given Riemannian metric on $M$?  In general the intertwined system would look like the following, 
$${\begin{split}
du_t^\epsilon&=C H(u_t^\epsilon)\circ db_t^\epsilon+{1\over \sqrt \epsilon} H(u_t^\epsilon)V(x_t^\epsilon, g_t^\epsilon)dt+
 {1\over \sqrt \epsilon}A_k^*(u_t^\epsilon)\circ dw_t^k+{1\over \epsilon} A_0^*(u_t^\epsilon)dt\\
dx_t^\epsilon&=C u_t^\epsilon\circ db_t^\epsilon+{1\over \sqrt \epsilon} u_t^\epsilon V(x_t^\epsilon, g_t^\epsilon)dt.
\end{split}}$$
Below we compute a simple case. 
The argument, with suitable adjustments,  remains valid for the general case. 

\begin{example}
For simplicity consider $\R^n\times SO(n)$ with the standard  connection, and the SDE
\begin{equation}\label{J}
{\begin{split}
dg_t^\epsilon&={1\over \sqrt \epsilon}g_t^\epsilon A_k \circ dw_t^k\\
dx_t^\epsilon&=\delta g_t^\epsilon\circ db_t+{1\over \sqrt \epsilon} g_t^\epsilon V(x_t^\epsilon, g_t^\epsilon)dt.
\end{split}}
\end{equation}
Here $V$ is a $\R^n$ valued function such that $\int_G gV(x,g) dg=0$ where $dg$ is the Haar measure. For example take $V(g)$ to be a function of even powers of $g$. We assume that $V$ is suitably bounded with its partial derivatives in $x$ suitably bounded. 
The parameter  $\delta$ is  to be chosen. 

Letting $A_k^*(g)=gA_k$.
 $\L_0={1\over 2} \sum_k (A_k^*)^2$, assume that it is ${1\over 2}\Delta^L$. Taking  $\delta=\sqrt \epsilon$, formal computation by multi scale analysis shows that :
 
 {\bf Claim.} The limiting law for $x_t^\epsilon$ is govern by the   partial differential equation on $\R^n$:
\begin{equation}
\label{ou-2}
{\partial \rho \over \partial t}=-\int L_{gV(x,g)\partial_x}\L_0^{-1}({gV(x,g)}\rho) \; dg,
\end{equation}
 where the integral is with respect to the Haar measure on $SO(n)$.

  If $\delta=1$ it ought to have, in addition, a $\Delta_M$ term on the right hand  side:
   $${\partial  \rho\over \partial t}=\Delta_M f_t-\int L_{uV(x,u)}\L_0^{-1}(L_{uV(x,u)}\rho)d\nu(u),$$
 which we do not discuss  rigorously.
A drift term in the $g$ equation can also be added.    Another interesting regime to consider is $\sum_i\delta_ig_t^\epsilon \circ db_t^i$ instead of $g_t^\epsilon \circ db_t$ with
   $\delta_i$ takes values from $\{1, \sqrt \epsilon\}$. In this case, a non-Laplacian like equation would follow. In the case that $\delta_i$ are all equal and $V(x,g)$ is independent of $x$, the system can be interpret as an intertwined pair through time scaling.

Equation (\ref{ou-2}) can be deduced by the methodology below.
 Let $f: M\to \R$ be a smooth compactly supported function and $\Delta_M$ the Laplacian on $M$. Then
 $$f( x_t^\epsilon)=f(x_0)+
 \delta \int_0^t df( g_s^\epsilon  db_s)+{1\over 2} \delta^2 \int_0^t \Delta_Mf(x_s^\epsilon) ds+ 
 {1\over \sqrt \epsilon}\int_0^t df(g_s^\epsilon  V(x_s^\epsilon, g_s^\epsilon))ds.$$
 If $h$ is solution to $\L_0h (x,g)=df_x(gV(x,g))$, then
 $$ {\begin{split}
 {1\over \sqrt \epsilon}\int_0^t df(g_s^\epsilon  V(x_s^\epsilon, g_s^\epsilon))ds
 &=\sqrt{\epsilon} h(x_t^\epsilon, u_t^\epsilon)-\sqrt{\epsilon}  h(x_0, u_0)
 -\sqrt \epsilon \delta \int_0^t \partial_x h(x_s^\epsilon, g_s^\epsilon) g_s^\epsilon \; db_s\\
& - \int_0^t \partial_g h(g_s^\epsilon A_k dw_s^k) -\sqrt \epsilon \delta^2 \int_0^t \Delta_M h(x_s^\epsilon, g_s^\epsilon) ds\\
& -\int_0^t L_{g_s^\epsilon V_s^\epsilon\partial _x}h(x_s^\epsilon, g_s^\epsilon) ds.
\end{split}}$$
Since $\delta=\sqrt \epsilon$, it is now easy to observe that  $\{x_t^\epsilon\}$ is a tight family. Since $g_t^\epsilon$
 is a fast ergodic motion and $x_t^\epsilon$ does not move much as $t\to 0$, under suitable conditions, 
 $$ \lim_{\epsilon\to 0} \lim_{t\to 0} {\E f(x_t^\epsilon)-f(x_0)\over t} 
 =\lim_{t\to 0} \lim_{\epsilon\to 0}  {\E f(x_t^\epsilon)-f(x_0)\over t}=\L_{gV\partial_x}h(x_0).$$
has the required limit.
\end{example} 

 \subsection{Perturbation to Horizontal Diffusions}
 \label{section-holonomy}
 Let $M$ be a compact connected $n$-dimensional smooth Riemannian manifold
 with connection $\nabla$. 
  Let $\varpi$ be the corresponding connection 1-form on the orthonormal frame bundle $OM$ with Lie group $G$, where $G$ is taken to be $O(n)$ or $SO(n)$ depending whether $M$ is oriented.  The horizontal bundle is integrable when and only when the curvature tensor of $\nabla$ vanishes. The Lie brackets of two fundamental horizontal vector fields will in general contribute
  to a vertical motion. However perturbation to horizontal flows can still be discussed and in this case we should consider not its projection to the manifold $M$ unless the connection $\nabla$ is flat, but its motion transversal to the holonomy  bundle.

   Let $u_0\in OM$ and $\tau: [0,1]\to M$ be a $C^1$  curve  with $\tau(0)=\pi(u_0)$. Let $\tilde \tau$ be the horizontal lift of $\tau$ through $u_0$.   The parallel displacement $\tilde \tau_1$  of $u_0$ can be written as $u_0a$ some $a\in G$. The set   of such $a$ that represents parallel displacements of $u_0$ forms a subgroup of $G$ and is called  the holonomy group with reference point $u_0\in OM$ which we denote by $\Phi(u_0)$. In another word $a\in \Phi(u_0)$
if $u_0$ and $u_0a$ are connected by a horizontal curve.  Denote by $\Phi_0(u_0)$ the restricted holonomy group which contains only  loops that are homotopic
  to the identity loop. By Theorem 4.2, in Kobayashi-Nomizu \cite{Kobayashi-NomizuI}, $\Phi(u_0)$ is a closed subgroup and a sub-manifold of $G$ with $\Phi_0(u_0)$ its identity component.  Since $M$ is connected all holonomy groups are isomorphic.

Two points of $OM$ are equivalent if they are connected by a $C^1$ horizontal curve.  For each $u$ in $OM$ let $P(u_0)$ be the holonomy bundle through $u_0$, it consists of all $u\in OM$ such that $u\sim u_0$, i.e. $u$ and $u_0$ are connected by a horizontal curve. We may consider $OM$ as disjoint union of sets of the form $P(u)$. 
Let $H=\Phi(u_0)$, which acts on $P$ on the right,  and $P/H$ be the modulus space of $P$ with respect to the equivalent relation. Then $P/H$ is a smooth manifold and it can be identified with the associated bundle with fibre $G/H$ and the equivalent relation: $(uh^{-1}, h\xi)$ where $\xi$ denotes the coset  corresponding to the identity.
We identify  $(u, a\xi)$ with the orbit in $P/H$ that contains $ua$.
Denote by $\Pi_1:P\to P/H$ the natural projection. The main task in the proof of the  theorem below is to make sense of freezing the conserved `variable' 
and  averaging out the `fast `variable'.

\begin{theorem}
 \label{Model-4}
 Let $M$ be a connected and compact Riemannian manifold with a Riemannian connection $\nabla$. Consider \begin{equation}\label{canonical-sde-2}
{ \begin{split}
du_t^\epsilon&=H(u_t^\epsilon)\circ d  b_t+H_0(u_t^\epsilon)dt+\sqrt{\epsilon} \sum_{k=1}^m Z_{k}(u_t^\epsilon)\circ d w_t^{k}+\epsilon  Z_0(u_t^\epsilon)dt,\\
u_0^\epsilon&=u_0,
\end{split}}\end{equation} 
where $H_0$ is a horizontal vector field and $Z_k$ are vertical vector fields. 
Then  $ \Pi_1(u^\epsilon_{t\over \epsilon})$  converges in law, which is identified in (\ref{holonomy-limit}) below.
Furthermore define $g_t^\epsilon$ by $u_t^\epsilon=\tilde x_t^\epsilon g_t^\epsilon$, where  $x_t^\epsilon= \pi(u_t^\epsilon)$ and $\tilde x_t^\epsilon$ its horizontal lift. The projection of
 $g_{t\over \epsilon}^\epsilon$ to the space of cosets, $G/\Phi(u_0)$, converges weakly. 
 \end{theorem}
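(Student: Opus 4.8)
The plan is to run the familiar split-into-horizontal-lift-plus-group-element argument, but now relative to the holonomy bundle rather than the whole frame bundle. First I would write $u_t^\epsilon=\tilde x_t^\epsilon g_t^\epsilon$ with $\tilde x_t^\epsilon$ the horizontal lift of $x_t^\epsilon=\pi(u_t^\epsilon)$ through $u_0$, and, exactly as in the proofs of Theorems \ref{level-thm} and \ref{geodesic-flow}, derive the decoupled equations: one for $\tilde x_t^\epsilon$ driven only by the horizontal noise $H(\cdot)\circ db_t$, $H_0(\cdot)dt$ (so $\tilde x_t^\epsilon$ is $\epsilon$-independent once we set the perturbation aside, or more precisely its fast part is), and one of the form $dg_t^\epsilon=\sqrt\epsilon\,TL_{g_t^\epsilon}\varpi[Z_k(\tilde x_t^\epsilon g_t^\epsilon)]\circ dw_t^k+\epsilon\,TL_{g_t^\epsilon}\varpi[Z_0(\tilde x_t^\epsilon g_t^\epsilon)]dt+(\text{horizontal-driven drift})$. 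The key geometric observation is that the $\sqrt\epsilon$ and $\epsilon$ vertical perturbation keeps moving the $g$-component around inside $G$, so after the $t\mapsto t/\epsilon$ time change the relevant fast variable is precisely the coset $g_{t/\epsilon}^\epsilon\Phi(u_0)\in G/\Phi(u_0)$ (the slow, "conserved at $\epsilon=0$" quantity being the holonomy-bundle class $\Pi_1(u_t^\epsilon)$, since for $\epsilon=0$ the motion is horizontal and therefore stays in the single holonomy bundle $P(u_0)$).

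Next I would set up tightness of the laws of $\{\Pi_1(u_{t/\epsilon}^\epsilon)\}$ on path space over $P/H$. Since $M$ is compact, $OM$ is compact and the cut-locus difficulties that forced the bump-function construction in Theorem \ref{level-thm} do not arise; one only needs the modulus-of-continuity estimate $\E\,\tilde\rho^2(\Pi_1(u_{s/\epsilon}^\epsilon),\Pi_1(u_{t/\epsilon}^\epsilon))\le C|t-s|$ for a smooth distance function $\tilde\rho$ on $P/H$, obtained by applying It\^o's formula to $F\circ\Pi_1$ for $F$ a test function on $P/H$ and reading off that the bounded-variation part is $O(1)$ after the time change while the martingale part is controlled by Doob/Burkholder. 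I would then pass to a weakly convergent subsequence $\epsilon_n\to0$ and identify the limit via the martingale problem: for $F\in C^\infty_c(P/H)$, apply It\^o to $F(\Pi_1(u_t^\epsilon))$, correct by a function $h$ solving $\A_u h=(\text{fluctuating part of }\A^\epsilon F)$ on the fibre $G/H$ (this is solvable because the vertical operator $\A_u=\frac12\sum(\varpi[Z_k])^2+\varpi[Z_0]$ pushed to $G/H$ has, by the spanning hypothesis on $\{\varpi_u[Z_j]\}$, a unique invariant measure $\mu_u$ on the fibre and is ergodic), and then average the leading bounded-variation term against $\mu_u$. The limiting generator $\bar\L$ acting on functions on $P/H$ is obtained by this averaging and is the content of what I would label equation (\ref{holonomy-limit}); the statement about $g_{t/\epsilon}^\epsilon$ projected to $G/\Phi(u_0)$ follows because, in a local horizontal trivialisation, the $P/H$-class is exactly the $G/\Phi(u_0)$-coset of $g^\epsilon$, so the two convergences are the same statement read in two charts.

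The last routine ingredient is the ergodic averaging lemma in the spirit of Lemma \ref{Hopf-convergence} and Lemma \ref{lemma}: chop $[s/\epsilon,t/\epsilon]$ into subintervals of length $1/\sqrt\epsilon$, freeze the slow $\Pi_1$-variable on each, apply the rate-of-convergence estimate for the fibrewise ergodic theorem, and reassemble the pieces into a Riemann sum converging to $\int_s^t\bar\L F(\Pi_1(u_r))\,dr$; compactness of $G$ and hence of the fibre $G/H$ makes all the error bounds uniform. The main obstacle, and the step deserving genuine care rather than citation, is the geometric setup of "freezing the conserved variable": one must check that $\Pi_1:P\to P/H$ really does make $P/H$ a smooth manifold with the asserted fibre $G/H$ (this uses Theorem 4.2 of Kobayashi--Nomizu \cite{Kobayashi-NomizuI} that $\Phi(u_0)$ is a closed Lie subgroup), that the vertical perturbation vector fields $Z_k$ descend to well-defined vector fields on each fibre $G/H$ independent of the choice of representative in the holonomy bundle, and that the invariant measure $\mu_u$ on the fibre depends measurably (indeed smoothly) on the base point in $P/H$ — only then is $\bar\L$ a bona fide diffusion operator and the martingale problem well posed.
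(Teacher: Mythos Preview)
Your proposal has the fast and slow variables reversed, and the averaging is consequently carried out against the wrong measure. In (\ref{canonical-sde-2}) the horizontal part $H(u_t^\epsilon)\circ db_t+H_0(u_t^\epsilon)\,dt$ carries no $\epsilon$, while the vertical perturbation is of order $\sqrt\epsilon$ and $\epsilon$; after the time change $t\mapsto t/\epsilon$ the horizontal motion becomes $O(1/\sqrt\epsilon)$ (fast) and the vertical perturbation becomes $O(1)$ (slow). Thus the fast dynamics is the \emph{horizontal} diffusion, which for $\epsilon=0$ is confined to the holonomy bundle $P(u_0)$ and is ergodic there with invariant measure $\mu_{P(u_0)}$ (H\"ormander's condition on $P(u_0)$ follows from Ambrose--Singer). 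Your claim that the coset $g_{t/\epsilon}^\epsilon\Phi(u_0)\in G/\Phi(u_0)$ is the fast variable while $\Pi_1(u_t^\epsilon)$ is the slow one is internally inconsistent: since $[u_t^\epsilon]=[u_0]g_t^\epsilon$, the $G/\Phi(u_0)$-class of $g_t^\epsilon$ and the $P/H$-class $\Pi_1(u_t^\epsilon)$ are the \emph{same} object. Both are slow, and the second assertion of the theorem is just the first one read through this identification.

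Consequently your Poisson-equation step is misdirected. There is no spanning hypothesis on $\{\varpi_u[Z_j]\}$ in this theorem (you have imported it from Theorem~\ref{level-thm}), and the vertical operator $\tfrac12\sum(\varpi[Z_k])^2+\varpi[Z_0]$ is not the operator to invert. The paper's argument runs as follows: (i) by the Reduction Theorem one has $T\Pi_1(HT_uOM)=0$, so It\^o's formula applied to $f\circ\Pi_1$ kills the horizontal (fast) terms \emph{exactly}, leaving only the $\sqrt\epsilon\,Z_k$ and $\epsilon\,Z_0$ contributions; (ii) write $Z_k=\sum_j\sigma_k^j A_j^*$ and observe that only the components $j>n_0$ transversal to the Lie algebra of $\Phi(u_0)$ survive $T\Pi_1$; (iii) the coefficients $\sigma_k^j(u_s^\epsilon)$ depend on the full frame $u_s^\epsilon$, which after the time change is driven by the fast horizontal diffusion on the leaf $P([u_s^\epsilon])$, so one averages $\sigma_k^i\sigma_k^j$ and the drift coefficients against $\mu_{P(u)}$ on the holonomy bundle, not against any measure on $G/H$. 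No Poisson equation is needed, because step~(i) already removes the large terms; this is the key structural difference from Theorem~\ref{geodesic-flow}.
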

\begin{proof}
By the holonomy theorem of Ambrose-Singer \cite{Ambrose-Singer}  the Lie algebra of $\Phi(u_0)$ is a subspace of $\g$ and is spanned by matrices of the form $\Omega_v(w_1,w_2)$ where $w_1,w_2$ are horizontal vectors at $T_P$ and $v\in P(u_0)$.  If  $u\sim v$ then $\Phi(u)=\Phi(v)$.
Let $H=\Phi(u_0)$, a manifold whose dimension is denoted $n_0$. We define a distribution $S$  on $OM$: $S=\{T_u(P(u)): u\in OM\}$. It is of constant rank,  $n+n_0$.
This distribution is differentiable and involutive and $P(u)$ is the maximal integral manifold of $S$ through $u$. Note that the holonomy bundles are translations of each other: $P(u_0a) =P(u_0)a$, $a\in G$.  If  $u$ is equivalent to $v$,  the maximal integral manifolds through them are identical. 

Let $u_t$ be the solution starting from $u_0$ of the equation
$$du_t=H(u_t)\circ d  b_t+H_0(u_t)dt.$$
Then $u_t$ is constant in $t$. To see this let $f$ be a $BC^\infty$ function on $P/\Phi(u_0)$ and denote by 
$\Pi_1:P\to P/\Phi(u_0)$ the projection.
Then $$f([u_t])=f([u_0])+\int_0^t df \left (T\Pi_1(H(u_s)\right) \circ d  b_s
+\int_0^t df \left (T\Pi_1H_0(u_s)\right) ds.$$
By the Reduction Theorem, page 83 of Kobayashi-Nomizu\cite{Kobayashi-NomizuI}, 
each holonomy bundle $P(u)$ is a reduced bundle with structure group $\Phi(u)$ and the connection in $OM$ is reducible to a connection in $P(u)$. Hence
$$T_u(P(u))=HT_uOM\oplus VT_u(p(u)).$$
 In particular we have $T\Pi_1(HTOM)=0$ and
$f([u_t])=f([u_0])$.

We have shown that the solution to the horizontal SDE stays in  $P(u_0)$ for all time. The horizontal SDE, restricted to the maximal integrable manifold $P(u_0)$, satisfies the H\"ormander conditions and is ergodic
with a unique invariant measure $\mu_{P(u_0)}$.
 Fix a point $u_0\in P$ with $x_0=\pi(u_0)$. Let  $\nu$ be  the Haar measure on $H=\Phi(u_0)$. Denote by $\nu_a$ the Haar measure on $\Phi(u_0a)$. 
Note that if $v=u_0a$ some $a\in G$, let $u\in \Phi(u_0)$ and a horizontal curve $\alpha$ with  $\alpha_0=u_0, \alpha_1=u_0g, g\in \Phi(u_0)$. Then $\beta=\alpha_0a$ is horizontal with 
$\beta_0=v$ and $\beta_1=u_0ga=v a^{-1}ga$. 
Consequently  $\Phi(u_0a)=\ad(a^{-1})\Phi(u_0)$.  If $a \in H$,  $\Phi(u_0a)=\Phi(u_0)$ and $\nu_a=\nu$.

Denote by $N_{u}$ the following fibre of the holonomy bundle $P(u_0)$:  
$$N_{u}=\pi^{-1}(x)\cap P(u_0), \qquad  x=\pi(u).$$
 Locally  $N_{u_0a}=M\times\{\ad(a^{-1}\Phi(u_0)\}$ and $\mu_{P(u_0a)}=dx\times d\nu_a$ where $dx$ is the volume measure of the manifold $M$. 
 
 Let $F:OM\to \R$ be a $BC^1$ function, we  the integral
 $$\tilde F[P(u)]:=\int_{P(u)} F d\mu_{P(u)}$$
 is defined to be a number depending on a transversal of $P(u)$.
On each fibre of the holonomy bundle $P(u_0)$ we choose a reference element  $v(x)$, which determines reference elements on  holonomy bundles $P(ua)$, due to that $v(x)a$ is an element of $P(ua)$ where $u\in \pi^{-1}(x)$.  For any $u\in P(u_0)$ there is $g\in H$ such that $u=v(\pi(u))g$. 
 We define
 $$\int_{P(u_0)} F d\mu_{P(u_0)}:=\int_M \int_{P(u_0)\cap\pi^{-1}(x)} F(v(x)g) d\nu(g)dx.$$
 The resulting number is independent of the choice of $v$. To see this let $v'$ be another choice then $v'=vh$ some $h\in H$ and $u=va=v'h^{-1}a$. Since $G$ is a compact group, the Haar measure is bi-invariant,
 $${\begin{split}\int_{P(u_0)\cap\pi^{-1}(x)} F(v(x)g) d\nu_x(g)&=\int_{P(u_0)\cap\pi^{-1}(x)} F(v' (x)h^{-1}a) d\nu_x(g)\\
&=\int_{P(u_0)\cap\pi^{-1}(x)} F(v'(x) a') d\nu_x(g').\end{split}}$$
Similarly if $u=v(x)ag\in P(u_0a)$ the following integral is well defined:
 $$\int_{P(u)} F d\mu_{P(u)}:=\int_M \int_{P(u)\cap\pi^{-1}(x)} F(v(x)ag) d\nu_a(g)dx.$$

  Evaluate $f:P/\Phi(u_0)\to \R$ at $u_t^\epsilon$, denoting $\Pi_1(u)$ by $[u]$,  to see that
 $$f([u_t^\epsilon])=f([u_0^\epsilon])+\sqrt \epsilon\int_0^t df\left(T\Pi_1\left( Z_k(u_s^\epsilon)\right) \right)\circ dw_s^k +\epsilon \int_0^t df\left(T\Pi_1\left(Z_0(u_s^\epsilon)\right)\right)ds.$$
 Let $\m$ be the Lie algebra of $H$ and let 
$A_i, i=1,\dots, n_0$ be an o.n.b. of $\m$.
Let $B_j, j=n_0+1,\dots, N$ be an o.n.b. of the vertical part of the distribution $S$ at $u_0$.  Define $A_j=\varpi_{u_0}(B_{j})\in \g$. 
Consider the family of fundamental vertical vector fields  $\{A_j^*(u), j>n_0\}$, restricted to $P(u_0)$. Then $T\Pi_1(A_i^*)=0$ for $i\le n_0$ and for $j>n_0$,
$T_u\Pi_1(A_j^*)=A_j^*([u])$.

Writing  $Z_k$ in terms of the  basis $\{A_k\}$, 
$Z_k=\sum_j \sigma_k^jA_j^*$, we have
$${\begin{split} f([u_t^\epsilon])=&f([u_0^\epsilon])+  \sqrt \epsilon \sum_k\sum_{j=n_0+1}^N \int_0^t \sigma_k^j(u_s^\epsilon)\,  df\left( A_j^*([u_s^\epsilon])\right)\circ dw_s^k \\
&+\epsilon \sum_{j=n_0+1}^N\int_0^t \sigma^j_0 (u_s^\epsilon) \,df\left(A_j^*([u_s^\epsilon] )\right)ds.\end{split}}$$
The process $[u_t^\epsilon]$ is in general not Markov. It is however clear, following the standard method as used earlier, that the probability distributions $\{[u_{\cdot \over \epsilon}^\epsilon], \epsilon>0\}$ is tight and any  sequence of $[u_{t\over\epsilon}^\epsilon]$ has a convergent sub-sequence with the same limit. The limit can be identified below. Define $a_{i,j}([u])=\sum_{k\ge 1}\int \sigma_k^i\sigma_k^j d\mu_{P(u)}$, and 
$\bar Z=\sum_{j=n_0+1}^N \bar \sigma_0^j A_j^*$. For $i,j\ge n_0$ define
$$\bar \sigma_0^j([u])= \int_{P(u)} \left(\sigma_0^j +{1\over 2}\sum_{k\ge 1}  d\sigma_k^j(Z_k) \right)d\mu_{P(u)}.$$
Then
\begin{equation}\label{holonomy-limit}
\L f ([u])= \sum_{i,j=n_0+1}^N   a_{i,j} ([u])\nabla df\left( A_j^*, A_i^*\right)+ df(Z([u])).
\end{equation}
 The rest of the proof for the convergence is similar to that of Theorem \ref{level-thm}. 

For the second statement consider  the process $g_t^\epsilon$ defined by
$u_t^\epsilon=\widetilde{ \pi(u_t^\epsilon)}g_t^\epsilon$. Let  $p: G\to G/H$ be the canonical homomorphism.  For $a\in G$ denote by  $[a]$ the left coset of $H$ that contains $a$. 
 Finally note that, if $[u]$ denotes an element of of $P/H$ that contains $u$,
 $$[u_t^\epsilon]=[\widetilde{ \pi(u_t^\epsilon)}]g_t^\epsilon=[u_0]g_t^\epsilon,$$
 and the motion in $[u_t^\epsilon]$ is essentially the motion of $g_t^\epsilon$, while the latter
 is considered to be a representative of  $G/\Phi(u_0)$. Specifically,
 $${\begin{split}
dg_t^\epsilon&=\sqrt \epsilon \sum_{k=1}^m\sum_{j=1}^N \sigma_k^j(u_t^\epsilon)g_t^\epsilon A_j \circ dw_t^k+\epsilon \sum_{j=1}^N\sigma_0^j(u_t^\epsilon)g_t^\epsilon A_j,\\
d[g_t^\epsilon]&=\sqrt \epsilon \sum_{k=1}^m\sum_{j=n_0+1}^N \sigma_k^j(u_t^\epsilon)[g_t^\epsilon] A_j \circ dw_t^k+\epsilon \sum_{j=n_0+1}^N\sigma_0^j(u_t^\epsilon)[g_t^\epsilon] A_j.
\end{split}}$$
 The second statement thus follows. \end{proof}


According to Theorem 8.2 in  Kobayashi-Nomizu\cite{Kobayashi-NomizuI} if $OM$ is connected there is a connection on $OM$ such that  $P(u)=OM$. On the other extreme if the curvature vanishes the orthonormal frame bundle $OM$  foliates. This is so for a  Lie group  with the Left or right invariant connection. 
 If   $M$ is simply connected the curvature zero case corresponds to the product bundle with the trivial connection.
 See also section 6.2 in Elworthy-LeJan-Li \cite{Elworthy-LeJan-Li-book-2} for a  discussion on the equivalence of the stochastic holonomy and holonomy and
Arnaudon-Thalmaier \cite{Arnaudon-Thalmaier} for work on Yang-Mills Fields and random holonomy.


 \bigskip
 
\noindent {\bf Acknowledgement.} It is a pleasure to thank M. Hairer, Y. Maeda and S. Rosenberg for helpful discussions, J. Norris for insisting on a scaling with an unusual limit, and D. Elworthy for discussions and inspiring references. This research was supported by the EPSRC grant EP/E058124/1.
 
 \bibliographystyle{plain}
  \bibliography{OM}

 




\end{document}